\newtheorem{theorem}{Theorem}
\newtheorem{lemma}{Lemma}
\theoremstyle{definition}
\newcommand{\beq}{\begin{equation}}
\newcommand{\eeq}{\end{equation}}
\newcommand{\beqs}{\begin{eqnarray*}}
\newcommand{\eeqs}{\end{eqnarray*}}
\newcommand{\beqn}{\begin{eqnarray}}
\newcommand{\eeqn}{\end{eqnarray}}
\newcommand{\beqa}{\begin{array}}
\newcommand{\eeqa}{\end{array}}
\DeclareMathOperator{\VOL}{vol}
\DeclareMathOperator{\TR}{tr}
\newcommand{\R}{\mathbb{R}}
\newcommand{\dd}{\mathop{}\!\mathrm{d}}
\newcommand{\abs}[1]{\left\vert#1\right\vert}
\newcommand{\set}[1]{\left\{#1\right\}}
\newcommand{\norm}[1]{\left\Vert#1\right\Vert}
\newcommand{\pd}{\partial}
\newcommand{\delbar}{\overline{\nabla}}
\newcommand{\one}{\mathbf{1}}
\newcommand{\uS}{\mathbb{S}^{n-1}}
\newcommand{\MA}{Monge-Amp\`ere }
\newcommand{\OM}{Orlicz-Minkowski }
\begin{document}

\title{Existence of smooth even solutions to the dual Orlicz-Minkowski problem}

\author{Li Chen}
\address{Faculty of Mathematics and Statistics, Hubei Key Laboratory of Applied Mathematics, Hubei University, Wuhan 430062, P.R. China}
\email{chernli@163.com}

\author{YanNan Liu}
\address{School of Mathematics and Statistics, Beijing Technology and Business University, Beijing 100048, P.R. China}
\email{liuyn@th.btbu.edu.cn}

\author{Jian Lu}
\address{South China Research Center for Applied Mathematics and Interdisciplinary Studies, South China Normal University, Guangzhou 510631, P.R. China} 
\email{jianlu@m.scnu.edu.cn; \  lj-tshu04@163.com}

\author{Ni Xiang}
\address{Faculty of Mathematics and Statistics, Hubei Key Laboratory of Applied Mathematics, Hubei University, Wuhan 430062, P.R. China}
\email{nixiang@hubu.edu.cn}

\thanks{This research was supported by funds from Natural Science Foundation of China No.11871432 and No.11971157, Hubei Provincial Department of Education Key Projects D20171004, D20181003. The second author was also supported in part by Beijing Natural Science Foundation No.1172005.}

\date{}

\begin{abstract}
In this paper we study the dual Orlicz-Minkowski problem, which is a
generalization of the dual Minkowski problem in convex geometry.
By considering a geometric flow involving Gauss curvature and functions of
normal vectors and radial vectors, we obtain a new existence result of smooth
even solutions to this problem for smooth even measures. 
\end{abstract}

\keywords{
  Monge-Amp\`ere equation,
  dual Orlicz-Minkowski problem,
  Gauss curvature flow,
  Existence of solutions.
}

\subjclass[2010]{
35J96, 52A20, 53C44.
}

\maketitle
\vskip4ex

\section{Introduction}

Let $\varphi : (0,+\infty) \to (0,+\infty)$ and $G : \R^n\backslash 0 \to
(0,+\infty)$ be two continuous positive functions.
For a given positive function $f$ defined on the unit sphere $\uS$, we are
concerned with the solvability of the \MA type equation
\begin{equation} \label{dOMP-f}
  c\, \varphi(h) G(\delbar h) \det(\nabla^2h +hI) =f \text{ on } \uS
\end{equation}
for some positive constant $c$ and some support function $h$ of a
bounded convex body $K$ in the Euclidean space $\R^n$ containing the origin.
Here $\nabla$ is the covariant derivative with respect to an orthonormal frame
on $\uS$, $I$ is the unit matrix of order $n-1$, and
$\delbar h(x) =\nabla h(x) +h(x) x$ is the point on $\partial K$ whose
outer unit normal vector is $x\in\uS$.

The simplest case of Eq. \eqref{dOMP-f} is when $\varphi$ and $G$ are both
constant functions, which is the \emph{classical Minkowski problem}.
A typical example of Eq. \eqref{dOMP-f} is when $\varphi(s)=s^{1-p}$ and
$G(y)=|y|^{q-n}$, which is the \emph{$L_p$ dual Minkowski problem}.
The $L_p$ dual Minkowski problem was recently introduced in
\cite{LYZ.Adv.329-2018.85}, which unifies the dual Minkowski problem ($p=0$) and
the $L_p$-Minkowski problem ($q=n$). 
The dual Minkowski problem was first proposed by Huang, Lutwak, Yang and Zhang
in their recent groundbreaking work \cite{HLYZ.Acta.216-2016.325} and then
followed by 
\cite{BHP.JDG.109-2018.411,
  HP.Adv.323-2018.114,
  HJ.JFA.277-2019.2209, 
  LSW.JEMSJ.22-2020.893,
  Zha.CVPDE.56-2017.18,
  Zha.JDG.110-2018.543}.
The $L_p$-Minkowski problem was introduced by Lutwak \cite{Lut.JDG.38-1993.131}
in 1993 and has been extensively studied since then; see e.g. 
\cite{
  BLYZ.JAMS.26-2013.831,
  CLZ.TAMS.371-2019.2623,
  Zhu.Adv.262-2014.909}
for the logarithmic Minkowski problem ($p=0$),
\cite{JLW.JFA.274-2018.826, 
  JLZ.CVPDE.55-2016.41,
  Lu.SCM.61-2018.511,
  Lu.JDE.266-2019.4394,
  LW.JDE.254-2013.983,
  Zhu.JDG.101-2015.159}
for the centroaffine Minkowski problem ($p=-n$), and
\cite{CW.Adv.205-2006.33,
  HLYZ.DCG.33-2005.699,
  LYZ.TAMS.356-2004.4359,
  Sta.Adv.167-2002.160}
for other cases of the $L_p$-Minkowski problem.
For the general $L_p$ dual Minkowski problem, much progress has already been made 
\cite{BF.JDE.266-2019.7980,
  CHZ.MA.373-2019.953,
  CCL,
  HLYZ.JDG.110-2018.1,
  HZ.Adv.332-2018.57,
  LLL}.
Another important special case of Eq. \eqref{dOMP-f} is when $G$ is a constant
function, which is the \emph{\OM problem}, see 
\cite{BBC.AiAM.111-2019.101937,
  HLYZ.Adv.224-2010.2485,
  HH.DCG.48-2012.281,
  JL.Adv.344-2019.262}.

Equation \eqref{dOMP-f} is in fact the \emph{dual \OM problem}, which 
is a basic problem in the dual Orlicz-Brunn-Minkowski theory in modern
convex geometry.
This problem is a generalization of the above-mentioned Minkowski type problems,
and it asks what are the necessary and sufficient conditions for a Borel measure
on the unit sphere $\uS$ to be a multiple of the Orlicz dual curvature measure
of a convex body in $\R^n$ \cite{GHW+.CVPDE.58-2019.12}. 
When the given measure is absolutely continuous, it is equivalent to solving Eq.
\eqref{dOMP-f}.

Some existence results of solutions to Eq. \eqref{dOMP-f} have already been known.
In \cite{GHW+.CVPDE.58-2019.12, GHXY.CVPDE.59-2020.15},
several sufficient conditions for the existence of generalized solutions
were obtained for general measures or even measures.
In \cite{LL.TAMS.373-2020.5833}, an existence result of smooth solutions was proved for smooth
measures, in which the constant $c$ can be taken as $1$.

In this paper we study the existence of smooth even solutions to Eq.
\eqref{dOMP-f}.
A function $g:\uS\to\R$ is called \emph{even} if
\begin{equation*}
g(-x)=g(x), \quad \forall\,x\in\uS.
\end{equation*}
Note that a support function is even if and only if the convex body determined
by this support function is origin-symmetric.
Let $B_1$ be the unit ball in $\R^n$.
We obtain the following existence result for the even case of the dual \OM problem.

\begin{theorem}\label{thm1}
Suppose $\varphi\in C^\infty(0,+\infty)$ is a positive function, and $G\in
C^\infty(\R^n\backslash0)$ and $f\in C^\infty(\uS)$ are two positive and even
functions. 
If $\varphi$ and $G$ satisfy either 
\begin{equation}\label{cond1}
  \int_0^1 \frac{\dd s}{\varphi(s)}<+\infty, \quad
  \int_1^{\infty} \frac{\dd s}{\varphi(s)}=+\infty, \quad\text{and }
  \int_{B_1} G(y) \dd y <+\infty;
\end{equation}
or
\begin{equation}\label{cond2}
  \int_1^{\infty} \frac{\dd s}{\varphi(s)}<+\infty, \quad
  \int_{B_1} G(y) \dd y =+\infty, \quad\text{and }
  \int_{\R^n\backslash B_1} G(y) \dd y <+\infty,
\end{equation}
then there exists a smooth even solution to equation \eqref{dOMP-f} for some
positive constant $c$. 
\end{theorem}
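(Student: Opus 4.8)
The plan is to produce $h$ as the long‑time limit of a normalized Gauss–curvature‑type flow whose stationary states are exactly the solutions of \eqref{dOMP-f}, in the spirit of the flow approach to the (dual) Minkowski and Orlicz–Minkowski problems.

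\emph{Setting up the flow.} Put $\Phi(s)=\int_1^s\varphi(\tau)^{-1}\dd\tau$, so $\Phi'=1/\varphi>0$; under \eqref{cond1} one has $\Phi(0^+)>-\infty$ and $\Phi(+\infty)=+\infty$, while under \eqref{cond2} one has $\Phi(+\infty)<+\infty$, and we fix the additive constant so that $\Phi\ge0$ in the first case and $\Phi\le0$ in the second. Next introduce a radial potential $\widehat G(u,r)$ with $\partial_r\widehat G(u,r)=G(ru)\,r^{n-1}$, namely $\widehat G(u,r)=\int_0^r G(\tau u)\tau^{n-1}\dd\tau$ under \eqref{cond1} and $\widehat G(u,r)=-\int_r^{\infty}G(\tau u)\tau^{n-1}\dd\tau$ under \eqref{cond2}; both are finite by the integrability hypotheses on $G$. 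For a convex body $K$ with radial function $\rho_K$ and support function $h_K$ set
\[
  \mathcal J(K)=\int_{\uS}\widehat G(u,\rho_K(u))\,\dd u,\qquad
  \mathcal E(K)=\int_{\uS}\Phi(h_K(x))\,f(x)\,\dd x .
\]
Starting from a ball $K_0=B_{r_0}$, run the flow of convex bodies $K_t$ whose support functions solve
\[
  \partial_t h
  = h\left(\eta(t)-\frac{f}{\varphi(h)\,G(\delbar h)\,\det(\nabla^2h+hI)}\right)
  \ \text{on }\uS\times(0,T),\qquad h(\cdot,0)\equiv r_0,
\]
with $\eta(t)$ determined by the requirement $\mathcal J(K_t)\equiv\mathcal J(K_0)$; using the first variation $\tfrac{\dd}{\dd t}\mathcal J(K_t)=\int_{\uS}G(\delbar h)\,\partial_th\,\det(\nabla^2h+hI)\,\dd x$ this forces
\[
  \eta(t)=\frac{\int_{\uS}fh/\varphi(h)\,\dd x}{\int_{\uS}hG(\delbar h)\det(\nabla^2h+hI)\,\dd x}.
\]
A Cauchy–Schwarz inequality then shows $\tfrac{\dd}{\dd t}\mathcal E(K_t)\le0$, with equality at a time $t$ if and only if $f/(\varphi(h)G(\delbar h)\det(\nabla^2h+hI))$ is constant on $\uS$, i.e.\ iff $h(\cdot,t)$ solves \eqref{dOMP-f}. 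Since $f$ and $G$ are even and $K_0$ is origin‑symmetric, uniqueness for this (nonlocal) parabolic equation forces $h(-x,t)=h(x,t)$ for all $t$, so every $K_t$ is origin‑symmetric — this is precisely what makes the even case tractable.

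\emph{A priori estimates.} Short‑time existence of a smooth, uniformly convex solution is standard parabolic theory (the nonlocal factor $\eta(t)$ is handled by a contraction argument). The core is the uniform bound $0<c_1\le h(\cdot,t)\le c_2$, where origin‑symmetry is decisive: if $\max_{\uS}h(\cdot,t)=R$ at $x_0$, then $K_t\supseteq[-Rx_0,Rx_0]$, so $h(\cdot,t)\ge R/2$ on a cap of measure bounded below independently of $R$; under \eqref{cond1} this, combined with $f\ge c_0>0$, $\Phi(+\infty)=+\infty$, and the monotonicity and boundedness of $\mathcal E$, rules out $R\to\infty$, giving $\max h\le c_2$; and if $\min_{\uS}h(\cdot,t)\to0$ then by symmetry $K_t$ is squeezed into a thin slab, so $\mathcal J(K_t)=\int_{K_t}G\to0$ by $\int_{B_1}G<+\infty$, contradicting conservation of $\mathcal J$, whence $\min h\ge c_1$. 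Under \eqref{cond2} the two bounds come the other way around: the divergence $\int_{B_1}G=+\infty$ forces $\mathcal J(K_t)=-\int_{\R^n\backslash K_t}G\to-\infty$ if $K_t$ thins out, so $\min h\ge c_1$, while the upper bound $\max h\le c_2$ is obtained by a maximum‑principle argument applied directly to the evolution equation for $h$, using $\int_{\R^n\backslash B_1}G<+\infty$ and a bound on $\eta(t)$. Once the $C^0$ bound holds, $|\nabla h|\le c_2$ (since $\partial K_t\subset B_{c_2}$), and $\eta(t)$ is pinched between positive constants because its denominator is comparable to the surface area of $K_t$, which lies between those of $B_{c_1}$ and $B_{c_2}$. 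A second‑order maximum‑principle estimate then gives an upper bound $\nabla^2h+hI\le C\,I$ on the principal radii; a maximum‑principle bound on the speed $h\eta(t)-\partial_th=fh/(\varphi(h)G(\delbar h)\det(\nabla^2h+hI))$ yields $\det(\nabla^2h+hI)\ge c>0$, and with the upper bound this forces $\nabla^2h+hI\ge c\,I$, so $K_t$ stays smooth and uniformly convex and the flow is uniformly parabolic. Krylov–Safonov together with Schauder theory then upgrade everything to $C^\infty$ bounds uniform in $t$; in particular the flow exists for all $t\in[0,+\infty)$.

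\emph{Passing to the limit, and the main obstacle.} Since $\mathcal E(K_t)$ is non‑increasing and bounded (below by $0$ under \eqref{cond1}, and by $\Phi(c_1)\int_{\uS}f\,\dd x$ under \eqref{cond2} once $h\ge c_1$), one has $\int_0^{\infty}\big({-}\tfrac{\dd}{\dd t}\mathcal E\big)\dd t<+\infty$, hence $\tfrac{\dd}{\dd t}\mathcal E(K_{t_k})\to0$ along some $t_k\to\infty$. By the uniform $C^\infty$ estimates and Arzelà–Ascoli, $h(\cdot,t_k)\to h_\infty$ in $C^\infty(\uS)$ along a subsequence, with $h_\infty$ even, $0<c_1\le h_\infty\le c_2$, and uniformly convex; the Cauchy–Schwarz deficit vanishes in the limit, forcing $f/(\varphi(h_\infty)G(\delbar h_\infty)\det(\nabla^2h_\infty+h_\infty I))$ to be a positive constant $1/c$, i.e.\ $c\,\varphi(h_\infty)G(\delbar h_\infty)\det(\nabla^2h_\infty+h_\infty I)=f$ on $\uS$, so $h_\infty$ is the desired smooth even solution. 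I expect the genuinely hard point to be the uniform two‑sided $C^0$ estimate, where the exact form of \eqref{cond1}–\eqref{cond2} must be played against origin‑symmetry; in particular under \eqref{cond2} the entropy $\mathcal E$ is not coercive as $h\to\infty$, so the upper bound for $h$ requires real use of the flow equation rather than the monotone/conserved functionals alone. The curvature (second‑order) estimate is also technically heavy, but follows the by‑now‑standard pattern for Gauss‑curvature flows.
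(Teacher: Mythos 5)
Your overall scheme is the same as the paper's: the normalized Gauss curvature flow with $\eta(t)$ chosen to conserve the $G$ dual volume (your $\mathcal J$ is exactly $\widetilde V_G$ up to sign, and your flow is the paper's flow \eqref{ht} up to the bounded time-dependent factor $1/\eta$), the Orlicz entropy $\int\Phi(h)f$ monotone by Cauchy--Schwarz (the paper's $J(t)$ in \eqref{Jt}), $C^0$ bounds, curvature bounds via auxiliary functions, Krylov--Safonov, and subsequential convergence through vanishing dissipation and the equality case of H\"older. Your treatment of \eqref{cond1}, of the lower bound under \eqref{cond2}, and of the higher-order estimates and the limit all match the paper's proof in substance.

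The genuine gap is the upper bound $\max_{\uS}h(\cdot,t)\le c_2$ under \eqref{cond2}. You assert it follows from ``a maximum-principle argument applied directly to the evolution equation, using $\int_{\R^n\backslash B_1}G<\infty$ and a bound on $\eta(t)$,'' but this is not substantiated and, as stated, does not work: (i) the two-sided bound on $\eta(t)$ is itself derived from two-sided $C^0$ bounds on $h$ (numerator and denominator of \eqref{eta} are only controlled once $\rho$ and $h$ are pinched), so using it to prove the upper bound is circular; (ii) at a spatial maximum point one only gets $\mathcal K\ge h^{1-n}$ and $\delbar h=hx$, so the competition is between $\eta h$ and $f\,h^{2-n}/(\varphi(h)G(hx))$, and mere integrability of $G$ outside $B_1$ gives no pointwise decay of $G(hx)$ --- $G$ may have arbitrarily tall integrable spikes along rays to infinity, so the speed need not become negative for large $h$. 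Moreover, your stated reason for abandoning the entropy (``$\mathcal E$ is not coercive as $h\to\infty$'') is mistaken: the paper's Lemma 3(b) shows the monotone functional \emph{does} suffice once the lower bound $h\ge C_1$ is known. Since under \eqref{cond2} the function $\phi$ of \eqref{phi} is positive, strictly decreasing and tends to $0$, the non-decreasing $J$ gives $J(0)\le f_{\max}\phi(R_t\delta)\,|\uS\setminus S_t|+f_{\max}\phi(C_1)|S_t|$, and choosing $\delta$ so small that $f_{\max}\phi(C_1)|S_t|\le J(0)/2$ forces $\phi(R_t\delta)\ge J(0)/(2C_nf_{\max})>0$, hence $R_t$ is bounded. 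You should replace your maximum-principle claim by this (or an equivalent) argument; without it the a priori estimates, and hence the whole existence proof under \eqref{cond2}, are incomplete.
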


To the best of our knowledge, both conditions \eqref{cond1} and \eqref{cond2}
are new in the study of the dual \OM problem. 
We note that the special case when $\varphi(s)=s^{1-p}$ and
$G(y)=|y|^{q-n}$ satisfies the condition \eqref{cond1} if $p>0$ and $q>0$, and
satisfies the condition \eqref{cond2} if $p<0$ and $q<0$. 
So Theorem \ref{thm1} includes the even $L_p$ dual Minkowski problem with
$pq>0$, which was recently studied by Chen, Huang and Zhao
\cite{CHZ.MA.373-2019.953}.

In fact, our proof of Theorem \ref{thm1} is inspired by
\cite{CHZ.MA.373-2019.953},
where the existence of smooth even solutions to the $L_p$ dual Minkowski problem
was obtained by studying a generalized Gauss curvature flow.
Let $M_{0}$ be a smooth, closed, uniformly convex hypersurface in the Euclidean
space $\R^{n}$, which is origin-symmetric and given by a smooth embedding
$X_{0}: \mathbb{S}^{n-1} \rightarrow \R^{n}$. 
In this paper we consider a family of closed hypersurfaces $\set{M_t}$ given by
$M_t=X(\uS,t)$, where $X: \mathbb{S}^{n-1}\times[0,T) \rightarrow \R^{n}$ is a
smooth map satisfying the following initial value problem:
\begin{equation}\label{flow}
  \begin{split}
    \frac{\pd X}{\pd t} (x,t)
    &= -f(\nu)\eta(t) \frac{\langle X, \nu \rangle}{\varphi(\langle X, \nu \rangle) G(X)} \mathcal{K} \nu +  X,\\
    X(x,0) &= X_{0}(x).
  \end{split}
\end{equation}
Here $\nu$ is the unit outer normal vector of the hypersurface $M_t$ at the
point $X(x,t)$, $\eta$ is a scalar function to be determined in order to keep
$M_t$ normalized in a certain sense, $\langle \cdot,\cdot \rangle$ is the
standard inner product in $\R^n$, $\mathcal{K}$ is the Gauss curvature of
$M_{t}$ at $X(x,t)$, and $T$ is the maximal time for which the solution exists. 
The Gauss curvature flow and its various generalizations have been extensively
studied by many scholars; see for example 
\cite{And.IMRN.1997.1001,
AGN.Adv.299-2016.174,
BCD.Acta.219-2017.1,
BIS.AP.12-2019.259,
CW.AIHPANL.17-2000.733,
GL.DMJ.75-1994.79,
Ger.CVPDE.49-2014.471,
Ham.CAG.2-1994.155,
Iva.JFA.271-2016.2133,
LSW.JEMSJ.22-2020.893,
LL.TAMS.373-2020.5833,
Urb.JDG.33-1991.91}
and the references therein.

We will prove that the flow \eqref{flow} has a long-time solution, which
sub-converges to a solution to Eq. \eqref{dOMP-f} as time tends to infinity. 
To prove the long-time existence, the key step is
to obtain uniformly positive lower and upper bounds for
the support function of $M_t$.
Since flow \eqref{flow} involves two inhomogeneous functions 
$\varphi$ and $G$, which is more complicated than that of
\cite{CHZ.MA.373-2019.953},
we need more efforts to obtain these estimates, including constructing a proper $\eta(t)$
and imposing suitable constraints on $\varphi$ and $G$; see section 3 for details.

In this paper we obtain the long-time existence and convergence of the flow \eqref{flow}.

\begin{theorem}\label{thm2}
Suppose $\varphi$, $G$ and $f$ satisfy the assumptions of Theorem \ref{thm1},
and $\eta(t)$ is given by \eqref{eta} in section 3.
For any smooth, closed, origin-symmetric, uniformly convex hypersurface
$M_{0}$ in $\R^{n}$,
the flow \eqref{flow} has a unique smooth solution, which exists for all time
$t>0$.
Moreover, when $t\to\infty$, a subsequence of $M_{t}=X(\uS,t)$ converges in
$C^{\infty}$ to a smooth, closed, origin-symmetric, uniformly convex
hypersurface, whose support function is a smooth even solution to Eq.
\eqref{dOMP-f} for some positive constant $c$.
\end{theorem}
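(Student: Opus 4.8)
The plan is to follow the standard curvature-flow scheme for Minkowski-type problems. First I would reduce the flow \eqref{flow} to a scalar parabolic PDE for the support function $h(x,t)$ of $M_t$. Using the fact that along the flow $\langle X,\nu\rangle = h$ and $\mathcal K = \det(\nabla^2 h + hI)^{-1}$ (the reciprocal Gauss curvature as a function on $\uS$), the evolution equation becomes
\begin{equation*}
  \frac{\pd h}{\pd t}(x,t) = -f(x)\,\eta(t)\,\frac{h}{\varphi(h)\,G(\delbar h)}\,\frac{1}{\det(\nabla^2 h + hI)} + h,
\end{equation*}
a parabolic Monge-Amp\`ere equation on $\uS\times[0,T)$. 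Short-time existence and uniqueness then follow from standard parabolic theory since the initial hypersurface is smooth and uniformly convex, so the linearized operator is uniformly elliptic at $t=0$. The choice of $\eta(t)$ (formula \eqref{eta}) is designed so that some geometric quantity — most naturally the ``dual Orlicz norm'' type integral $\int_{\uS} \Phi(h)\,f\,d\sigma$ or the volume-type quantity $\int_{\uS} G$-weighted radial integral — is preserved in time; I would verify this conservation law by differentiating under the integral sign and using the divergence structure of $\det(\nabla^2 h + hI)$.

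Next comes the heart of the argument: a priori estimates, uniform in $t$. \textbf{Step 1 (preservation of symmetry).} Since $f$, $G$ are even and $M_0$ is origin-symmetric, the solution stays origin-symmetric for all $t$; this lets us invoke the crucial fact that for an origin-symmetric convex body $\max h \le \mathrm{width}$-type bounds hold, i.e. one controls $\max_{\uS} h$ and $\min_{\uS} h$ in terms of each other. \textbf{Step 2 ($C^0$ bounds).} This is the main obstacle. One shows $0 < c_1 \le h(\cdot,t) \le c_2 < \infty$ with constants independent of $t$. The upper bound: at a spatial maximum point of $h$, $\nabla^2 h \le 0$ so $\det(\nabla^2 h + hI) \le h^{n-1}$, giving a differential inequality $\frac{d}{dt}\max h \le -c\,\eta(t)\,\frac{(\max h)^{2-n}}{\varphi(\max h) G(\cdot)} + \max h$; combined with the conservation law (which forbids $h$ from blowing up while the preserved integral stays fixed) and the growth/decay conditions on $\varphi$ — here is exactly where $\int_1^\infty ds/\varphi(s)$ being finite or infinite enters — one rules out $\max h \to \infty$. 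The lower bound is the dual statement, using origin-symmetry to pass from the upper bound on $h$ at one point to a lower bound elsewhere, together with the hypotheses on $\int_0^1 ds/\varphi(s)$ and the integrability of $G$ near $0$ (resp. near $\infty$). The two alternative hypotheses \eqref{cond1}, \eqref{cond2} correspond to whether one controls the body from being too large or too small as the dominant danger.

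Once the $C^0$ bounds are in hand, \textbf{Step 3 ($C^1$ and $C^2$ bounds)} proceeds by the usual maximum-principle arguments applied to the evolution equations for $|\nabla h|^2$ and for the eigenvalues of $\nabla^2 h + hI$ (equivalently, upper bounds on the principal curvatures of $M_t$ and a positive lower bound, i.e. uniform convexity preserved). Crucially the $C^0$ bounds confine $h$ and $\delbar h$ to a compact region away from the origin, so $\varphi(h)$, $G(\delbar h)$ and all their derivatives are bounded above and below there — this is what makes the principal-curvature estimates go through exactly as in \cite{CHZ.MA.373-2019.953}. Then $\frac{f\eta h}{\varphi(h)G(\delbar h)}$ is uniformly elliptic and the bootstrap via Krylov-Safonov and Schauder theory gives uniform $C^{k,\alpha}$ bounds for all $k$, hence long-time existence ($T=\infty$). \textbf{Step 4 (convergence).} Finally, using the conservation law and monotonicity of a suitable functional (I expect $\int_{\uS}$ of a $\varphi$-primitive of $h$ times $f$, or the dual-volume functional, to be monotone along the flow), one shows $\pd_t h \to 0$ along a subsequence of times; the uniform $C^\infty$ bounds then give $C^\infty$ subconvergence of $M_{t_j}$ to a limit hypersurface whose support function $h_\infty$ satisfies $-f\,\eta_\infty\,\frac{h_\infty}{\varphi(h_\infty)G(\delbar h_\infty)}\det(\nabla^2 h_\infty + h_\infty I)^{-1} + h_\infty = 0$, i.e. \eqref{dOMP-f} with $c = \eta_\infty^{-1} > 0$. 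The main difficulty throughout, compared with the homogeneous $L_p$ dual case, is that the two inhomogeneous factors $\varphi$ and $G$ must be simultaneously tamed by the single scaling freedom $\eta(t)$, which forces the somewhat delicate split into the two hypothesis sets \eqref{cond1} and \eqref{cond2}.
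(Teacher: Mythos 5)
Your overall scheme matches the paper's: reduce to the scalar flow for $h$, pick $\eta(t)$ to conserve a dual-volume-type quantity, prove uniform $C^0$, gradient and curvature bounds (the latter as in \cite{CHZ.MA.373-2019.953}), conclude long-time existence via Krylov--Safonov plus parabolic theory, and pass to a limit using a monotone functional together with the equality case of H\"older's inequality. The gap is in the central step, the uniform $C^0$ bound, where your proposed mechanism does not close. First, the pointwise argument at the spatial maximum of $h$ is circular and has the wrong structure: the factor $\eta(t)$ is nonlocal and in the paper it is only bounded \emph{after} the $C^0$ estimate (Lemma \ref{lem04} uses Lemma \ref{lem03}), so you cannot feed it into a pointwise ODE for $\max h$; moreover, at a maximum point $\delbar h = hx$, and for the model case $\varphi(s)=s^{1-p}$, $G(y)=|y|^{q-n}$ the inequality you wrote becomes $\frac{d}{dt}\max h \le -c\,\eta\,(\max h)^{p+1-q}+\max h$, which prevents blow-up only if $p>q$ --- a condition not implied by \eqref{cond1} or \eqref{cond2}. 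Second, the claim that the conserved quantity ``forbids $h$ from blowing up while the preserved integral stays fixed'' is false in general: when $G$ is integrable near the origin (case \eqref{cond1}), an origin-symmetric body can become arbitrarily long and thin while $\int_K G\,\dd y$ stays constant (think $G\equiv 1$ and bodies of fixed volume), so conservation of $\widetilde V_G$ alone gives no upper bound on $\max h$.

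What the paper actually does at this point is different and is the place where the hypotheses on $\varphi$ enter: it introduces $\phi$, a primitive of $1/\varphi$, and shows the functional $J(t)=\int_{\uS}\phi(h)f\,\dd x$ is monotone (non-increasing under \eqref{cond1}, non-decreasing under \eqref{cond2}) via the choice of $\eta$ and H\"older's inequality. The upper bound on $\max h$ then follows from monotonicity of $J$, the origin-symmetry estimate $h(x,t)\ge R_t|\langle x,x_t\rangle|$, and the fact that $\phi(s)\to+\infty$ (case \eqref{cond1}), respectively $\phi$ decreasing to $0$ together with a splitting of $\uS$ near the equator $\set{|\langle x,x_t\rangle|<\delta}$ (case \eqref{cond2}); the conservation of $\widetilde V_G$ is used only for the \emph{lower} bound, via absolute continuity of $\int G$ plus the minimum-ellipsoid estimate under \eqref{cond1}, and via a Blaschke-selection/Fatou argument exploiting $\int_{B_1}G=+\infty$ under \eqref{cond2}. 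You do identify the monotone functional $\int_{\uS}\phi(h)f$, but you deploy it only at the convergence step; without using it (together with evenness) for the $C^0$ estimate, your a priori bounds --- and hence everything downstream, including the boundedness of $\eta(t)$ and the curvature estimates --- are not established. The convergence step itself is fine in outline, modulo stating it as ``$J'(t_j)\to0$ forces equality in H\"older, hence the limit equation'' rather than ``$\pd_t h\to0$''.
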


This paper is organized as follows.
In section 2, we give some basic knowledge about convex hypersurfaces and the
flow \eqref{flow}. 
In section 3, more properties of the flow \eqref{flow} will be proved, based on
which we can obtain the uniform lower and upper bounds of support functions
of $\set{M_t}$ via delicate analyses.
Then the bounds of principal curvatures and the long-time existence will be
proved in section 4.
In the last section, we prove the sub-convergence of $M_t$ as $t$ tends to infinity,
completing the proofs of our theorems.

\section{Preliminaries}

\subsection{Basic properties of convex hypersurfaces}

We first recall some basic properties of convex hypersurfaces in
$\R^n$; see \cite{Urb.JDG.33-1991.91} for details.
Let $M$ be a smooth, closed, uniformly convex hypersurface in $\R^n$ enclosing
the origin.
The support function $h$ of $M$ is defined as
\begin{equation} \label{h0}
h(x) := \max_{y\in M} \langle y,x \rangle, \quad \forall\, x\in\uS,
\end{equation}
where $\langle \cdot,\cdot \rangle$ is the standard inner product in $\R^n$.

The convex hypersurface $M$ can be recovered by its support function $h$.
In fact, writing the Gauss map of $M$ as $\nu_M$, we parametrize $M$ by
$X : \uS\to M$ which is given as
\begin{equation*}
X(x) =\nu_M^{-1}(x), \quad \forall\,x\in\uS.
\end{equation*}
Note that $x$ is the unit outer normal vector of $M$ at $X(x)$.
On the other hand, one can easily check that the maximum in the definition
\eqref{h0} is attained at $y=\nu_M^{-1}(x)$, namely
\begin{equation} \label{h}
  h(x) = \langle x, X(x)\rangle, \quad\forall\, x \in \mathbb{S}^{n-1}. 
\end{equation}
Let $e_{ij}$ be the standard metric of the unit sphere $\mathbb{S}^{n-1}$, and
$\nabla$ be the corresponding connection on $\mathbb{S}^{n-1}$.
Differentiating the both sides of \eqref{h}, we have
\begin{equation*}
  \nabla_{i} h = \langle \nabla_{i}x, X(x)\rangle + \langle x, \nabla_{i}X(x)\rangle. 
\end{equation*}
Since $\nabla_{i}X(x)$ is tangent to $M$ at $X(x)$, there is
\begin{equation*}
  \nabla_{i} h = \langle \nabla_{i}x, X(x)\rangle,
\end{equation*}
which together with \eqref{h} implies that
\begin{equation}\label{Xh}
  X(x) = \nabla h(x) + h(x)x, \quad \forall\,x\in\uS.
\end{equation}

By differentiating \eqref{h} twice, the second fundamental form $A_{ij}$ of $M$
can be also computed in terms of the support function:
\begin{equation} \label{A}
  A_{ij} =  \nabla_{ij}h + he_{ij}, 
\end{equation}
where $\nabla_{ij}$ denotes the second order covariant derivative with respect to $e_{ij}$.
The induced metric matrix $g_{ij}$ of $M$ can be derived by Weingarten's formula:
\begin{equation} \label{g}
  e_{ij} = \langle \nabla_{i}x, \nabla_{j}x\rangle  = A_{ik}A_{lj}g^{kl}. 
\end{equation}
The principal radii of curvature are eigenvalues of the matrix $b_{ij} =
A^{ik}g_{jk}$.
When considering a smooth local orthonormal frame on $\uS$, by virtue of
\eqref{A} and \eqref{g}, we have 
\begin{equation} \label{bij}
  b_{ij} = A_{ij} = \nabla_{ij}h + h\delta_{ij}.
\end{equation}
Now the Gauss curvature of $M$ at $X(x)$ is given by
\begin{equation*}
\mathcal{K}(x) = [\det(\nabla_{ij}h + h\delta_{ij})]^{-1}. 
\end{equation*}
We shall use $b^{ij}$ to denote the inverse matrix of $b_{ij}$.

The radial function $\rho$ of the convex hypersurface $M$ is defined as
\begin{equation*}
\rho(u) :=\max\set{\lambda>0 : \lambda u\in M}, \quad\forall\, u\in\uS.
\end{equation*}
Note that $\rho(u)u\in M$.
The Gauss map $\nu_M$ can be computed as 
\begin{equation*}
  \nu_M(\rho(u)u) = \frac{\rho(u)u - \nabla \rho}{\sqrt{\rho^{2}+|\nabla \rho|^{2}}}.
\end{equation*}
If we connect $u$ and $x$ through the following equality:
\begin{equation}\label{eq:8}
\rho(u)u =X(x) =\nabla h(x) +h(x)x =\delbar h(x),
\end{equation}
then there is
\begin{equation}\label{eq:9}
x= \frac{\rho(u)u - \nabla \rho}{\sqrt{\rho^{2}+|\nabla \rho|^{2}}}.
\end{equation}

\subsection{Geometric flow}

Recalling the evolution equation of $X(x,t)$ in the geometric flow \eqref{flow},
and using similar computations as in \cite{Urb.JDG.33-1991.91}, we obtain the
evolution equation of the corresponding support function $h(x,t)$:
\begin{equation}\label{ht}
  \frac{\pd h}{\pd t} (x,t)
  = -f(x)\eta(t) \frac{h(x,t)}{\varphi(h) G(\delbar h)} \mathcal{K}(x,t) +  h(x,t)
  \ \text{ in }\ \uS\times(0,T).
\end{equation}
Since $M_t$ can be recovered by $h(\cdot,t)$, the flow \eqref{ht} is equivalent
to the original flow \eqref{flow}.

Denote the radial function of $M_t$ by $\rho(u,t)$.
For each $t$, 
let $u$ and $x$ be related through the following equality:
\begin{equation*}
\rho(u,t)u =\delbar h(x,t) =\nabla h(x,t) +h(x,t)x.
\end{equation*}
Therefore, $x$ can be expressed as $x=x(u,t)$.
Taking the inner product with $x$ on both sides, we have
\begin{equation}\label{eq:2}
\rho(u,t) \langle x,u \rangle =h(x,t).
\end{equation}
Differentiating it with respect to $t$, we obtain
\begin{equation}\label{eq:1}
  \pd_t\rho(u,t) \langle x,u \rangle 
  + \rho(u,t) \langle \pd_tx,u \rangle
  = \langle \nabla h(x,t), \pd_tx \rangle
  +\pd_th(x,t).
\end{equation}
Note that
\begin{equation*}
  \begin{split}
    \rho(u,t) \langle \pd_tx,u \rangle - \langle \nabla h(x,t), \pd_tx \rangle
    &= \langle \pd_tx,\rho(u,t)u -\nabla h(x,t) \rangle \\
    &= \langle \pd_tx,h(x,t)x \rangle \\
    &=0.
  \end{split}
\end{equation*}
Combining it with \eqref{eq:1} and recalling \eqref{eq:2}, we have
\begin{equation}\label{eq:4}
  \pd_t\rho(u,t) =\frac{\pd_th(x,t)}{\langle x,u \rangle}
  =\frac{\rho(u,t)}{h(x,t)}\pd_th(x,t).
\end{equation}
Now by virtue of \eqref{ht}, we obtain the evolution equation of $\rho(u,t)$:
\begin{equation}\label{rhot}
  \frac{\pd \rho}{\pd t} (u,t)
  = -f(x)\eta(t) \frac{\rho(u,t)}{\varphi(h) G(\rho u)} \mathcal{K}(x,t) +  \rho(u,t)
  \ \text{ in }\ \uS\times(0,T),
\end{equation}
where $x=x(u,t)$ is the unit outer normal vector of $M_t$ at the point $\rho(u,t)u$.

\section{Uniform bounds of support functions}

In this section, we will derive uniformly positive lower and upper bounds of
support functions along the flow \eqref{flow}.

For convenience, in the following of this paper, we always assume that $M_{0}$
is a smooth, closed, origin-symmetric, uniformly convex hypersurface in
$\R^{n}$, $\varphi$, $G$ and $f$ are functions satisfying the assumptions of
Theorem \ref{thm1}, and $h: \uS\times[0,T)\to \R$ is a smooth solution to the
evolution equation \eqref{ht} with the initial $h(\cdot,0)$ being the support function
of $M_0$. 
Here $T$ is the maximal time for which the solution exists.
Let $M_t$ be the convex hypersurface determined by $h(\cdot,t)$, 
$\rho(\cdot,t)$ be the corresponding radial function,
and $K_t$ be the convex body enclosed by $M_t$.

To obtain uniform bounds of $h(\cdot,t)$, we need to choose an appropriate
scalar function $\eta(t)$.
Note that Eq. \eqref{dOMP-f} admits a variational structure which involves the
general dual volume of convex bodies. 
For any convex body $K$ containing the origin in its interiors, we define its
\emph{$G$ dual volume} as 
\begin{equation} 
  \widetilde{V}_G(K)=
  \begin{cases}
    \displaystyle\int_K G(y)\dd y, & \text{when \eqref{cond1} holds}, \\[2.5ex]
    \displaystyle\int_{\R^n\backslash K} G(y)\dd y, & \text{when \eqref{cond2} holds}.
  \end{cases}
  \label{Gdv}
\end{equation}
By virtue of the third inequalities in the conditions \eqref{cond1} or
\eqref{cond2}, $\widetilde{V}_G(K)$ is well defined.
The concept of $G$ dual volume of a convex body is a natural extension of $q$-th
dual volume; see \cite[Page 8]{GHW+.CVPDE.58-2019.12} for more discussions.
We want to keep the $G$ dual volumes of $\set{K_t}$ unchanged along the flow \eqref{ht}.
In fact, we have the following lemma.

\begin{lemma}\label{lem01}
When $\eta(t)$ is given as
\begin{equation} \label{eta}
  \eta(t) 
  = \frac{\displaystyle\int_{\uS} G(\delbar h) h(x,t) /\mathcal{K} \,\dd x}
  {\displaystyle\int_{\uS} f(x) h(x,t)/ \varphi(h) \dd x},
\end{equation}
the $G$ dual volumes of $\set{K_t}$ remain unchanged, namely
\begin{equation*}
  \widetilde{V}_G(K_t)= \widetilde{V}_G(K_0), \quad \forall\, t\in[0,T). 
\end{equation*}
\end{lemma}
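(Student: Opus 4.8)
The plan is to show that $\frac{\dd}{\dd t}\widetilde{V}_G(K_t)=0$ for all $t\in[0,T)$; the claim then follows immediately by integration. I would begin by recalling the formula for the $G$ dual volume in terms of the radial function. Writing the integral $\int_K G(y)\,\dd y$ in polar coordinates gives
\begin{equation*}
  \int_{K_t} G(y)\,\dd y
  = \int_{\uS}\!\int_0^{\rho(u,t)} G(ru)\, r^{n-1}\,\dd r\,\dd u,
\end{equation*}
and similarly $\int_{\R^n\backslash K_t} G(y)\,\dd y = \int_{\uS}\int_{\rho(u,t)}^{\infty} G(ru)\,r^{n-1}\,\dd r\,\dd u$ in the case \eqref{cond2}. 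Differentiating under the integral sign with respect to $t$ (justified by smoothness of $\rho$ and the integrability in \eqref{cond1} or \eqref{cond2}), in both cases one gets
\begin{equation*}
  \frac{\dd}{\dd t}\widetilde{V}_G(K_t)
  = \pm \int_{\uS} G(\rho(u,t)u)\, \rho(u,t)^{n-1}\, \pd_t\rho(u,t)\,\dd u,
\end{equation*}
the sign being $+$ under \eqref{cond1} and $-$ under \eqref{cond2}; in either case it suffices to show the integral vanishes.

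Next I would substitute the evolution equation \eqref{rhot} for $\pd_t\rho$ and then change variables from $u\in\uS$ to $x\in\uS$ via the diffeomorphism $u\mapsto x(u,t)$ determined by $\rho(u,t)u=\delbar h(x,t)$. The point is to convert the radial-coordinate integral into an integral over the normal image, where the flow equation for $h$ lives. Using \eqref{eq:8} one has $G(\rho u)=G(\delbar h)$, and the relation \eqref{eq:2}, namely $\rho\langle x,u\rangle = h$, handles the extra factors; the Jacobian of the change of variables is the standard one relating $\dd u$ and $\dd x$ for a convex body (it equals $\mathcal{K}^{-1}$ times a power of $\rho$ and $\langle x,u\rangle$, exactly the combination that makes $\dd\mathcal{H}^{n-1}|_{M_t}$ appear). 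After this substitution the integrand splits into the product of $f(x)\eta(t)h(x,t)/\varphi(h)$ (coming from the first term of \eqref{rhot}) and $G(\delbar h)h(x,t)/\mathcal{K}$ (coming from the second term of \eqref{rhot}, together with the Jacobian), and one is left with
\begin{equation*}
  \frac{\dd}{\dd t}\widetilde{V}_G(K_t)
  = \pm\left( -\eta(t)\int_{\uS} f(x)\frac{h(x,t)}{\varphi(h)}\,\dd x
  + \int_{\uS} G(\delbar h)\frac{h(x,t)}{\mathcal{K}}\,\dd x \right)\cdot(\text{const}),
\end{equation*}
up to getting the bookkeeping of Jacobian powers exactly right.

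Finally, plugging in the definition \eqref{eta} of $\eta(t)$ makes the bracket vanish identically, so $\frac{\dd}{\dd t}\widetilde{V}_G(K_t)=0$ and the lemma follows. I expect the main obstacle to be the careful bookkeeping in the change of variables $u\leftrightarrow x$: one must get the precise form of the Jacobian between $\dd u$, $\dd x$, and the surface measure on $M_t$, track the powers of $\rho$ and of $\langle x,u\rangle$, and use \eqref{eq:2} (or \eqref{eq:4}) repeatedly to simplify. The analytic justification for differentiating under the integral sign is routine given the finiteness hypotheses in \eqref{cond1}/\eqref{cond2} and the smoothness and uniform convexity of $M_t$ on compact time intervals, so the whole argument is essentially a computation once the change of variables is set up correctly.
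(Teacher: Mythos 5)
Your proposal is correct and follows essentially the same route as the paper: differentiate the polar-coordinate expression for $\widetilde{V}_G(K_t)$, use the flow equation, change variables between $u$ and $x$, and invoke the definition \eqref{eta} of $\eta(t)$ (the paper merely orders the steps slightly differently, converting $\pd_t\rho$ to $\pd_t h$ via \eqref{eq:4} before changing variables rather than substituting \eqref{rhot} first). The Jacobian bookkeeping you worry about is simply $\rho^n\dd u = h\,\mathcal{K}^{-1}\dd x$, under which the bracket becomes exactly $-\eta(t)\int_{\uS} f h/\varphi(h)\,\dd x + \int_{\uS} G(\delbar h)h/\mathcal{K}\,\dd x$ with no extra constant factor.
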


\begin{proof}
Observing that
\begin{equation*}
  \widetilde{V}_G(K_t)=
  \begin{cases}
    \displaystyle\int_{\uS} \dd u \int_0^{\rho(u,t)} G(r u) r^{n-1} \dd r,
    & \text{when \eqref{cond1} holds}, \\[2.5ex]
    \displaystyle\int_{\uS} \dd u \int_{\rho(u,t)}^{\infty} G(r u) r^{n-1} \dd r,
    & \text{when \eqref{cond2} holds},
  \end{cases}
\end{equation*}
we have
\begin{equation*} 
  \begin{split}
    \frac{\dd}{\dd t} \widetilde{V}_G(K_t)
    &= \pm\int_{\uS} G(\rho u) \rho(u,t)^{n-1} \pd_t \rho(u,t) \dd u \\
    &= \pm\int_{\uS} G(\rho u) \rho(u,t)^{n} \frac{\pd_t h(x,t)}{h(x,t)} \dd u \\
    &= \pm\int_{\uS} G(\delbar h)  \frac{\pd_t h(x,t)}{\mathcal{K}(x,t)} \dd x,
  \end{split}
\end{equation*}
where the second equality is due to \eqref{eq:4}, and the third equality is the
result of integration by substitution. 
Recalling the evolution equation \eqref{ht}, namely
\begin{equation*}
  \frac{G(\delbar h)}{\mathcal{K}(x,t)} \pd_th(x,t)
  = -f(x)\eta(t) \frac{h(x,t)}{\varphi(h)}  + \frac{G(\delbar h)}{\mathcal{K}(x,t)} h(x,t),
\end{equation*}
we obtain
\begin{equation*} 
  \frac{\dd}{\dd t} \widetilde{V}_G(K_t)
  = \pm\left[-\eta(t) \int_{\uS} f(x) \frac{h(x,t)}{\varphi(h)} \dd x
    + \int_{\uS} \frac{G(\delbar h)}{\mathcal{K}(x,t)} h(x,t) \dd x\right].
\end{equation*}
Therefore, if $\eta(t)$ is given by \eqref{eta}, there is
\begin{equation*}
  \frac{\dd}{\dd t} \widetilde{V}_G(K_t) \equiv0,
\end{equation*}
which completes the proof of this lemma. 
\end{proof}

The another important advantage of the choice of $\eta(t)$ is that it can ensure
the monotonicity of a related functional. 
For simplicity, write
\begin{equation} 
  \phi(s)=
  \begin{cases}
    \displaystyle\int_0^s 1/\varphi(\tau) \dd\tau, & \text{when \eqref{cond1} holds}, \\[2.5ex]
    \displaystyle\int_s^\infty 1/\varphi(\tau) \dd\tau, & \text{when \eqref{cond2} holds}.
  \end{cases}
  \label{phi}
\end{equation}
By virtue of the first inequalities in the conditions \eqref{cond1} or
\eqref{cond2}, $\phi(s)$ is well defined for every $s\in(0,+\infty)$.
Consider the following functional:
\begin{equation}\label{Jt}
J(t)=\int_{\uS} \phi(h(x,t)) f(x) \dd x, \quad t\geq 0,
\end{equation}  
which will turn out to be monotonic along the flow \eqref{ht}.

\begin{lemma}\label{lem02}
When the condition \eqref{cond1} holds, $J(t)$ is non-increasing. 
When the condition \eqref{cond2} holds, $J(t)$ is non-decreasing. 
\end{lemma}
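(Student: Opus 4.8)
The plan is to differentiate $J(t)$ with respect to $t$, substitute the evolution equation \eqref{ht} for $\partial_t h$, and then recognize the resulting expression as $\eta(t)$ times a comparison between two integrals — the same two integrals whose ratio defines $\eta(t)$ in \eqref{eta}. Concretely, since $\phi'(s) = \pm 1/\varphi(s)$ (with $+$ under \eqref{cond1} and $-$ under \eqref{cond2}, by the definition \eqref{phi}), the chain rule gives
\begin{equation*}
  J'(t) = \pm\int_{\uS} \frac{f(x)}{\varphi(h)}\, \partial_t h(x,t)\, \dd x.
\end{equation*}
First I would insert \eqref{ht}, i.e. $\partial_t h = -f(x)\eta(t)\dfrac{h}{\varphi(h)G(\delbar h)}\mathcal{K} + h$, to obtain
\begin{equation*}
  J'(t) = \pm\left[ -\eta(t)\int_{\uS} \frac{f(x)^2 h\, \mathcal{K}}{\varphi(h)^2 G(\delbar h)}\,\dd x + \int_{\uS} \frac{f(x) h}{\varphi(h)}\,\dd x \right].
\end{equation*}

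The main step is then to show the bracketed quantity is nonnegative, so that $J'(t)$ has the sign of $\pm 1$: nonpositive under \eqref{cond1} and nonnegative under \eqref{cond2}, as claimed. Substituting the explicit formula \eqref{eta} for $\eta(t)$, the claim reduces to the inequality
\begin{equation*}
  \left(\int_{\uS} \frac{G(\delbar h)\, h}{\mathcal{K}}\,\dd x\right)
  \left(\int_{\uS} \frac{f(x)^2 h\, \mathcal{K}}{\varphi(h)^2 G(\delbar h)}\,\dd x\right)
  \ \geq\
  \left(\int_{\uS} \frac{f(x) h}{\varphi(h)}\,\dd x\right)^2 .
\end{equation*}
This is precisely the Cauchy–Schwarz inequality on $L^2(\uS, h\,\dd x)$ applied to the pair of functions $\sqrt{G(\delbar h)/\mathcal{K}}$ and $\dfrac{f(x)}{\varphi(h)}\sqrt{\mathcal{K}/G(\delbar h)}$, whose product is exactly $f(x)/\varphi(h)$. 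Since $G$, $\varphi$, $f$, $h$, $\mathcal{K}$ are all positive along the flow (the convexity and smoothness being part of the standing hypotheses in this section), all three integrals are finite and positive, so Cauchy–Schwarz applies directly and yields the desired sign for $J'(t)$.

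I do not expect a serious obstacle here; the only mild care needed is bookkeeping of the $\pm$ sign, which flips consistently in $\phi'$ and in the conclusion (non-increasing vs. non-decreasing), so the two cases collapse into one computation up to an overall sign. One should also note in passing that the manipulations are justified by differentiation under the integral sign, which is legitimate on $[0,T)$ because $h(\cdot,t)$ is smooth and, on any compact time subinterval, bounded away from $0$ and $\infty$, keeping the integrands and their $t$-derivatives uniformly bounded. With that remark the proof is complete.
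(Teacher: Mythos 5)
Your proposal is correct and follows essentially the same route as the paper: differentiate $J$, insert the evolution equation \eqref{ht} and the definition \eqref{eta} of $\eta(t)$, and conclude via the Cauchy--Schwarz (H\"older) inequality applied to $\sqrt{Gh/\mathcal{K}}$ and $\frac{f}{\varphi}\sqrt{h\mathcal{K}/G}$, with the sign of $\phi'$ handling the two cases. The paper phrases this by multiplying $J'(t)$ by $\int_{\uS} f h/\varphi(h)\,\dd x$ rather than substituting the formula for $\eta(t)$, but the computation is identical.
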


\begin{proof}
We first prove for the case when \eqref{cond1} holds.
Now $\phi'(s)=1/\varphi(s)$, and
\begin{equation*}
  J'(t)
  =\int_{\uS} \phi'(h) \pd_th(x,t) f(x) \dd x
  =\int_{\uS} \frac{f(x)}{\varphi(h)} \pd_th(x,t) \dd x.
\end{equation*}  
Recalling \eqref{ht}, namely
\begin{equation*}
  \frac{f(x)}{\varphi(h)} \pd_th(x,t)
  = -\eta(t)\frac{f(x)^2h}{\varphi(h)^2G(\delbar h)} \mathcal{K}(x,t)
  + \frac{f(x)h}{\varphi(h)},
\end{equation*}
we have
\begin{equation*}
  J'(t)
  =
  \int_{\uS}\frac{fh}{\varphi(h)} \dd x
  -\eta(t) \int_{\uS}\frac{f^2h\mathcal{K}}{\varphi(h)^2G(\delbar h)} \dd x.
\end{equation*}
By the definition of $\eta(t)$ in \eqref{eta}, namely
\begin{equation*} 
  \eta(t) \int_{\uS} f h/ \varphi(h) \dd x
  = \int_{\uS} G(\delbar h) h /\mathcal{K} \,\dd x,
\end{equation*}
there is
\begin{equation}\label{eq:23}
  \begin{split}
    J'&(t) \int_{\uS} f h/ \varphi(h) \dd x \\
    &= \left( \int_{\uS}\frac{fh}{\varphi(h)} \dd x \right)^2
    -\int_{\uS} \frac{G(\delbar h) h}{\mathcal{K}} \dd x
    \int_{\uS}\frac{f^2h\mathcal{K}}{\varphi(h)^2G(\delbar h)} \dd x \\
    &= \left(
      \int_{\uS}
      \sqrt{\frac{Gh}{\mathcal{K}}}
      \cdot
      \frac{f}{\varphi}
      \sqrt{\frac{h\mathcal{K}}{G}}
      \dd x
    \right)^2
    -\int_{\uS} \frac{G h}{\mathcal{K}} \dd x
    \int_{\uS}\frac{f^2h\mathcal{K}}{\varphi^2G} \dd x \\
    &\leq0,
  \end{split}
\end{equation}
where the last inequality is due to the H\"older's inequality.
Therefore, $J(t)$ is non-increasing.

When the condition \eqref{cond2} holds, the proof is almost the same.
Since $\phi'(s)=-1/\varphi(s)$, there is
\begin{equation*}
  \begin{split}
    J'(t)
    &=\int_{\uS} \phi'(h) \pd_th(x,t) f(x) \dd x \\
    &=-\int_{\uS} \frac{f(x)}{\varphi(h)} \pd_th(x,t) \dd x \\
    &\geq0,
  \end{split}
\end{equation*}
where the last inequality is true due to the above argument.
Therefore, $J(t)$ is non-decreasing. 
\end{proof}

After these preparations, we now derive the uniform bounds of $h(\cdot,t)$.

\begin{lemma}\label{lem03}
There exists a positive constant $C$
independent of $t$, such that for every $t\in[0,T)$,
\begin{equation}\label{h1}
1/C \leq h(\cdot,t) \leq C \text{ on }\uS.
\end{equation}
It means that
\begin{equation}\label{rho1}
1/C \leq \rho(\cdot,t) \leq C \text{ on }\uS.
\end{equation} 
\end{lemma}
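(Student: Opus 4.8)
The plan is to combine the two conserved/monotone quantities established in Lemmas \ref{lem01} and \ref{lem02} to trap the support function from above and below. First I would obtain the upper bound. Let $h_{\max}(t)=\max_{x\in\uS}h(x,t)$; by origin-symmetry of $M_t$ (preserved by the flow since the initial data, $\varphi$, $G$, $f$ are all even) the convex body $K_t$ contains a segment of length $2h_{\max}(t)$ in some direction, and by convexity $K_t$ contains a ``needle'' — a cone or double cone on that segment — which already forces $\rho(u,t)$ to be bounded below by a definite fraction of $h_{\max}(t)$ on a spherical cap whose size is independent of $t$. In case \eqref{cond1}, this means $\widetilde V_G(K_t)=\int_{K_t}G\ge\int_{\text{needle}}G$, and because $\int_{B_1}G<\infty$ and $G>0$ is continuous away from $0$, the dual volume of such a needle grows without bound as $h_{\max}(t)\to\infty$; since $\widetilde V_G(K_t)=\widetilde V_G(K_0)$ is fixed, this yields $h_{\max}(t)\le C$. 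In case \eqref{cond2} the same needle argument instead uses that $\int_{\R^n\setminus B_1}G<\infty$ while $\int_{B_1}G=\infty$: a large inradius would make $\widetilde V_G(K_t)=\int_{\R^n\setminus K_t}G$ too small (indeed it would omit the bulk of the mass near the origin), again contradicting conservation — so here conservation of dual volume gives a lower bound on how much of $\R^n$ stays outside $K_t$, hence an upper bound on the size of $K_t$. Either way, $h(\cdot,t)\le C$.

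Next I would get the lower bound from the monotone functional $J(t)=\int_{\uS}\phi(h(x,t))f(x)\dd x$. In case \eqref{cond1}, $J$ is non-increasing, so $\int_{\uS}\phi(h(x,t))f(x)\dd x\le J(0)<\infty$; since $f$ is positive and continuous (hence bounded below by a positive constant), this bounds $\int_{\uS}\phi(h(x,t))\dd x$. Now $\phi(s)=\int_0^s\dd\tau/\varphi(\tau)\to\infty$ as $s\to0^+$ precisely because of the hypothesis $\int_0^1\dd s/\varphi(s)<\infty$ is... wait — here one uses that the first integral in \eqref{cond1} is \emph{finite}, so $\phi$ extends continuously to $0$ with $\phi(0)=0$; it is the \emph{second} integral being infinite, $\int_1^\infty\dd s/\varphi=\infty$, that makes $\phi(s)\to\infty$ as $s\to\infty$ — that is the ingredient really feeding the \emph{upper} bound, not the lower one. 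So for the lower bound in case \eqref{cond1} I must argue differently: combine the already-proven upper bound $h\le C$ with conservation of $\widetilde V_G(K_t)$ — if $h(\cdot,t)$ were small at some point $x_0$, then $K_t$, being squeezed between the tangent hyperplane at $x_0$ and the opposite supporting hyperplane (at distance $\le C$), is contained in a thin slab; combined with $\mathrm{diam}(K_t)\le C$, $K_t$ lies in a box that is thin in one direction, so $\int_{K_t}G\to0$ (case \eqref{cond1}) or $\int_{\R^n\setminus K_t}G\to\widetilde V_G(\R^n)$ is forced... — more cleanly, use the John ellipsoid: $K_t$ lies in a box of dimensions $\le C$ in $n-1$ directions and $\le\varepsilon$ in the last, so $\widetilde V_G(K_t)\to0$, contradicting $\widetilde V_G(K_t)=\widetilde V_G(K_0)>0$ in case \eqref{cond1}; in case \eqref{cond2} one uses $J(t)$ non-decreasing together with $\int_1^\infty\dd s/\varphi<\infty$ (so $\phi(s)\to$ finite limit... no, $\phi(s)=\int_s^\infty\dd\tau/\varphi$ is finite for all $s>0$ and $\to\infty$ as $s\to0^+$ because $\int_0^1\dd s/\varphi=\infty$ in \eqref{cond2}), giving $\phi(h_{\min}(t))\le J(t)/\min f\le$ ... hmm, $J$ non-decreasing bounds $J(t)$ from below not above. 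So in case \eqref{cond2} the monotonicity of $J$ feeds the \emph{upper} bound via $\phi(s)\to\infty$ as $s\to0$, i.e. it prevents $h$ from being too small — and the needle/dual-volume argument, which there gives an upper bound, combined with a slab argument gives the lower bound. In short: in each case one of $\{\widetilde V_G\text{-conservation}, J\text{-monotonicity}\}$ yields the upper bound and the other (together with an elementary John-ellipsoid/slab volume estimate) yields the lower bound; the roles are swapped between \eqref{cond1} and \eqref{cond2}.

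Concretely, I would structure the proof as: (i) show $M_t$ stays origin-symmetric; (ii) prove $h_{\max}(t)\le C$ — in case \eqref{cond1} from $\phi(h_{\max})\le\phi(h(x,t))$ on a cap (using $\phi$ increasing and $\phi(\infty)=\infty$) bounded by $J(0)/\!\min_{\uS} f$, and in case \eqref{cond2} from the needle argument using $\int_{\R^n\setminus B_1}G<\infty$; (iii) prove $h_{\min}(t)\ge 1/C$ — in case \eqref{cond2} from $\phi(h_{\min})\le J(t)/\min f$ wait that is backwards again, so: in case \eqref{cond2} from $J$ non-decreasing one has $J(t)\ge J(0)$, which is no help; instead bound $J(t)\le J(\text{later})$ — there is no upper control, so here the lower bound on $h$ must come purely from conservation of $\widetilde V_G$: a small $h_{\min}$ forces $K_t$ into a thin slab of bounded diameter, so $\mathrm{vol}(K_t)\to0$, but then since $G$ is bounded on the bounded region and $\int_{\R^n\setminus B_1}G<\infty$, $\widetilde V_G(K_t)=\int_{\R^n\setminus K_t}G\ge\int_{\R^n\setminus B_R}G$ for suitable... this forces $\widetilde V_G(K_t)$ close to $\widetilde V_G(\R^n)$, contradicting its being the fixed finite value $\widetilde V_G(K_0)<\widetilde V_G(\R^n)$; and in case \eqref{cond1} symmetrically, a small $h_{\min}$ makes $\int_{K_t}G\to0$, contradicting $\widetilde V_G(K_t)=\widetilde V_G(K_0)>0$. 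Finally (iv) translate the bounds on $h$ into the bounds \eqref{rho1} on $\rho$ via \eqref{eq:2}, $\rho(u,t)\langle x,u\rangle=h(x,t)$, noting $\rho_{\min}\ge h_{\min}$ and $\rho_{\max}$ is controlled because $X(x,t)=\delbar h(x,t)$ has $|X|\le\sup_x(|h|+|\nabla h|)$ and $|\nabla h|$ is controlled once both $h_{\min},h_{\max}$ and the diameter are — more simply, $K_t$ with inradius $\ge1/C$ and circumradius $\le C'$ has $\rho$ pinched between these. The main obstacle is bookkeeping: correctly matching which hypothesis ($\phi$-behavior at $0$ versus $\infty$, $G$-integrability near $0$ versus near $\infty$) drives which bound in which of the two cases, and making the ``needle/slab'' volume estimates quantitative and uniform in $t$; the analytic content is otherwise elementary given Lemmas \ref{lem01} and \ref{lem02}.
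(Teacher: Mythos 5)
Your treatment of case \eqref{cond1} matches the paper (upper bound from $J$ non-increasing, $\phi$ increasing with $\phi(+\infty)=+\infty$, evaluated on a cap around the maximizing direction; lower bound from conservation of $\widetilde V_G$ plus absolute continuity of $\int G$ on the bounded set $B_C$, then the thin-slab/minimum-ellipsoid estimate), and your slab argument for the lower bound in case \eqref{cond2} is morally the paper's (which makes it rigorous without any diameter bound by setting $L_k=B_1\cap K_{t_k}$, using Blaschke selection and Fatou together with $\int_{B_1}G=+\infty$). But there is a genuine gap in your upper bound for case \eqref{cond2}. You claim conservation of $\widetilde V_G(K_t)=\int_{\R^n\backslash K_t}G$ plus $\int_{\R^n\backslash B_1}G<+\infty$ bounds ``the size of $K_t$'' via a needle argument. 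What that conservation actually controls is the \emph{inradius}: if $K_t\supset B_R$ then $\widetilde V_G(K_t)\le\int_{\R^n\backslash B_R}G\to0$, so the inradius cannot blow up. It does not control $h_{\max}$: precisely because the tail mass $\int_{\R^n\backslash B_1}G$ is finite, a long thin protrusion (or an elongated sausage over a fixed inball) removes arbitrarily little $G$-mass from the complement, so bodies satisfying the exact constraint $\int_{\R^n\backslash K}G=\widetilde V_G(K_0)$ can have arbitrarily large circumradius. Your ``hence an upper bound on the size of $K_t$'' conflates inradius with circumradius, and this is exactly the step that fails.

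The paper closes case \eqref{cond2} differently, using the monotone functional you dismissed: since $J$ is non-decreasing, $J(t)\ge J(0)>0$ for all $t$; having first established the lower bound $h\ge C_1$ (by the $B_1$-intersection/Fatou argument, independent of any diameter bound), one estimates $J(t)\le f_{\max}\,\phi(R_t\delta)\,|\uS\backslash S_t|+f_{\max}\,\phi(C_1)\,|S_t|$, where $S_t$ is a thin band $|\langle x,x_t\rangle|<\delta$ chosen so small that the second term is at most $J(0)/2$. Because $\phi$ is decreasing with $\phi(s)\to0$ as $s\to\infty$ (this uses $\int_1^\infty\dd s/\varphi<\infty$), the inequality $J(0)/2\le C_nf_{\max}\phi(R_t\delta)$ forces $R_t\le C$. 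Your sketch explicitly rejects this route (``$J$ non-decreasing bounds $J(t)$ from below not above'') and then misstates what the monotonicity gives (``it prevents $h$ from being too small''); in fact the lower bound $J(t)\ge J(0)$ is exactly what is needed, because a huge $R_t$ would make $J(t)$ small. A secondary structural issue: as written, your case-\eqref{cond2} lower bound invokes a ``thin slab of bounded diameter,'' i.e.\ it presupposes the (flawed) upper bound; follow the paper's order (lower bound first, via $B_1\cap K_{t_k}$) to avoid circularity.
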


\begin{proof}
(a)
We first prove for the case when the condition \eqref{cond1} holds.
For each $t$, write
\begin{equation*}
R_t = \max_{x\in\uS} h(x,t) = h(x_t,t)
\end{equation*}
for some $x_t\in\uS$.
Since $G$ and $f$ are even, by the origin-symmetry of $M_0$,
$M_{t}$ is origin-symmetric.
Then we have by the definition of support function
that
\begin{equation*}
h(x,t)\geq R_t\cdot|\langle x,x_t \rangle|, \quad \forall x\in\uS.
\end{equation*}
By Lemma \ref{lem02}, $J(t)$ is non-increasing. 
Recalling the definitions of $\phi$ and $J$ in \eqref{phi} and \eqref{Jt}
respectively, we have the following estimates: 
\begin{equation*}
\begin{split}
  J(0) &\geq J(t) \\
  &\geq f_{\min} \int_{\uS} \phi(h(x,t)) \dd x \\
  &\geq f_{\min} \int_{\uS} \phi(R_t\cdot|\langle x,x_t \rangle|) \dd x \\
  &= f_{\min} \int_{\uS} \phi(R_t\cdot|x_1|) \dd x.
\end{split}
\end{equation*}
Denote $S_1 =\set{x\in\uS : |x_1|\geq 1/2}$, then
\begin{equation*}
\begin{split}
  J(0) 
  &\geq f_{\min} \int_{S_1} \phi(R_t\cdot|x_1|) \dd x \\
  &\geq f_{\min} \int_{S_1} \phi(R_t/2) \dd x \\
  &= f_{\min} \phi(R_t/2) |S_1|,
\end{split}
\end{equation*}
which implies that $\phi(R_t/2)$ is uniformly bounded from above.
By its definition, $\phi(s)$ is strictly increasing.
Moreover, by the second equality of the condition \eqref{cond1}, $\phi(s)$ tends to
$+\infty$ as $s\to+\infty$. 
Thus $R_t$ is uniformly bounded from above, leading to the second inequality in
\eqref{h1}.

By the second inequality in \eqref{h1}, there is
\begin{equation*}
K_t\subset B_C, \quad \forall\,t\in[0,T),
\end{equation*}
where $B_C$ denotes the ball centered at the origin with radius $C$.
By virtue of the third inequality of the condition \eqref{cond1},
\begin{equation*}
\int_{B_C} G(y)\dd y<+\infty,
\end{equation*}
which implies that there exists a $\delta>0$ such that
\begin{equation*}
  \int_A G(y)\dd y < \widetilde{V}_G(K_0)
  \quad 
  \text{for every measurable set } A\subset B_C \mbox{ with } |A|<\delta.
\end{equation*}
Recalling Lemma \ref{lem01}, $\widetilde{V}_G(K_t) = \widetilde{V}_G(K_0)$, namely
\begin{equation*}
  \int_{K_t} G(y)\dd y = \widetilde{V}_G(K_0),
\end{equation*}
which implies that
\begin{equation} \label{eq:3}
  \VOL(K_t)\geq\delta, \quad \forall\,t\in[0,T).
\end{equation}

Now one can easily obtain the
uniform positive lower bound of $h(\cdot,t)$.
In fact, recalling the concept of minimum ellipsoid of an origin-symmetric convex body,
there exists a positive constant $C_n$ depending only on $n$, such that 
\begin{equation*}
\VOL(K_t) \leq C_n R_t^{n-1} \cdot \min_{x\in\uS} h(x,t).
\end{equation*}
Therefore the uniform positive lower bound of $h(\cdot,t)$ follows from their uniform
upper bound and the uniform volume estimate \eqref{eq:3}.

(b)
Now we consider the case when the condition \eqref{cond2} holds.
We begin with the following estimate about $\widetilde{V}_G(K_t)$:
\begin{equation}\label{eq:5}
  \begin{split}
    \widetilde{V}_G(K_t)
    &= \int_{\R^n\backslash K_t} G(y) \dd y \\ 
    &\geq \int_{B_1\backslash K_t} G(y) \dd y \\ 
    &= \int_{B_1} G(y) \one_{B_1\backslash K_t}(y) \dd y,
  \end{split}
\end{equation}
where $\one$ denotes the characteristic function.
Let us now prove the first inequality of \eqref{h1}.
Suppose to the contrary that there exists a sequence of times $t_k\in[0,T)$ such
that
\begin{equation*}
  \min_{\uS} h(\cdot, t_k) \to0^+ \text{ as }k\to\infty.
\end{equation*}
Let $L_k=B_1\cap K_{t_k}$.
Then each $L_k$ is an origin-symmetric convex body contained in the unit ball
$B_1$. By the Blaschke selection theorem, $\set{L_k}$ has a subsequence which
converges to a nonempty, compact, convex subset. Without loss of generality, we
assume 
\begin{equation*}
L_k\to L \text{ as }k\to\infty,
\end{equation*}
where $L$ is an origin-symmetric convex subset of $B_1$.
Denote the support functions of $L_k$ and $L$ by $h_{L_k}$ and $h_L$
respectively. We have 
\begin{equation*}
  \begin{split}
    \min_{\uS} h_L
    &= \lim_{k\to\infty} \min_{\uS} h_{L_k} \\
    &\leq \lim_{k\to\infty} \min_{\uS} h(\cdot,t_k) \\
    &= 0,
  \end{split}
\end{equation*}
which together with that $L$ is origin-symmetric implies that $L$ is contained
in a hyperplane in $\R^n$.
Therefore, the characteristic function of $L$ is zero out of this hyperplane,
namely
\begin{equation*}
  \one_{L}=0 \ \text{ a.e. in } B_1.
\end{equation*}
Thus, for a.e. $y\in B_1$, there is
\begin{equation*}
  \begin{split}
   \lim_{k\to\infty} \one_{B_1\backslash K_{t_k}}(y)
    &= \lim_{k\to\infty} [1-\one_{L_k}(y)] \\
    &= 1-\one_{L}(y) \\
    &=1.
  \end{split}
\end{equation*}
Recalling the estimate \eqref{eq:5}, we have by the Fatou's lemma that
\begin{equation*}
  \begin{split}
    \liminf_{k\to\infty} \widetilde{V}_G(K_{t_k})
    &\geq \liminf_{k\to\infty} \int_{B_1} G(y) \one_{B_1\backslash K_{t_k}}(y) \dd y \\
    &\geq \int_{B_1} \liminf_{k\to\infty} G(y) \one_{B_1\backslash K_{t_k}}(y) \dd y \\
    &= \int_{B_1} G(y) \dd y \\
    &= +\infty,
  \end{split}
\end{equation*}
where the last equality is due to the second equality of the condition
\eqref{cond2}.
However, by Lemma \ref{lem01}, $\widetilde{V}_G(K_{t_k})= \widetilde{V}_G(K_0)$,
which means that the above inequality is impossible to hold.
Hence, the first inequality of \eqref{h1} is true.

Denote the uniformly positive lower bound of $h(\cdot,t)$ by $C_1$.
Let $\delta$ be a small positive constant satisfying the following inequality:
\begin{equation}\label{eq:6}
  \abs{\set{x\in\uS : |x_1|<\delta}}
  \leq
  \frac{J(0)}{2f_{\max}\phi(C_1)}.
\end{equation}
Again we write for each $t$ that
\begin{equation*}
R_t = \max_{x\in\uS} h(x,t) = h(x_t,t)
\end{equation*}
for some $x_t\in\uS$, and we have
\begin{equation*}
h(x,t)\geq R_t\cdot|\langle x,x_t \rangle|, \quad \forall x\in\uS.
\end{equation*}
By Lemma \ref{lem02}, $J(t)$ is non-decreasing.
By the definition of $\phi$ in \eqref{phi}, $\phi$ is strictly decreasing.
Denoting
\begin{equation*}
S_t =\set{x\in\uS : |\langle x,x_t \rangle|<\delta},
\end{equation*}
and recalling the expression of $J(t)$ in \eqref{Jt},
we have
\begin{equation}\label{eq:7}
  \begin{split}
    J(0) &\leq J(t) \\
    &= \int_{\uS\backslash S_t} \phi(h(x,t)) f(x) \dd x 
    + \int_{S_t} \phi(h(x,t)) f(x) \dd x \\
    &\leq f_{\max} \int_{\uS\backslash S_t} \phi(R_t\cdot|\langle x,x_t \rangle|) \dd x 
    + f_{\max} \int_{S_t} \phi(C_1) \dd x \\
    &\leq f_{\max} \int_{\uS\backslash S_t} \phi(R_t\delta) \dd x 
    + f_{\max} \phi(C_1) |S_t| \\
    &= f_{\max} \phi(R_t\delta) |\uS\backslash S_t|
    + f_{\max} \phi(C_1) |S_t|.
  \end{split}
\end{equation}
Recalling the definition of $\delta$ in \eqref{eq:6}, one can see that
\begin{equation*}
  |S_t|=
  \abs{\set{x\in\uS : |x_1|<\delta}}
  \leq
  \frac{J(0)}{2f_{\max}\phi(C_1)},
\end{equation*}
namely
\begin{equation*}
  f_{\max}\phi(C_1) |S_t|
  \leq
  \frac{J(0)}{2}.
\end{equation*}
Inserting it into \eqref{eq:7}, we obtain
\begin{equation*}
  \begin{split}
    \frac{J(0)}{2}
    &\leq f_{\max} \phi(R_t\delta) |\uS\backslash S_t| \\
    &\leq C_n f_{\max} \phi(R_t\delta),
  \end{split}
\end{equation*}
which implies that $\phi(R_t\delta)$ has a uniformly positive lower bound.
By its definition, $\phi(s)$ is strictly decreasing, and tends to $0^+$ as
$s\to+\infty$.
Thus $R_t$ is uniformly bounded from above, leading to the second inequality in
\eqref{h1}. 
The proof of this lemma is completed.
\end{proof}

From Lemma \ref{lem03}, we can obtain the following estimate about $\eta(t)$.

\begin{lemma}\label{lem04}
There exists a positive constant $C$
independent of $t$, such that for every $t\in[0,T)$,
\begin{equation*}
1/C \leq \eta(t) \leq C.
\end{equation*}
\end{lemma}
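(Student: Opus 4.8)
The plan is to bound the numerator and denominator of the expression \eqref{eta} for $\eta(t)$ separately, using the uniform bounds already established in Lemma \ref{lem03} together with the positivity and smoothness hypotheses on $\varphi$, $G$, $f$. Recall that
\begin{equation*}
  \eta(t) = \frac{\displaystyle\int_{\uS} G(\delbar h)\, h(x,t)/\mathcal{K}\,\dd x}{\displaystyle\int_{\uS} f(x)\, h(x,t)/\varphi(h)\,\dd x}.
\end{equation*}
By Lemma \ref{lem03} there is a constant $C_0>1$ with $1/C_0 \le h(\cdot,t) \le C_0$, and hence (by \eqref{eq:8}) $\delbar h(x) = \rho(u)u$ also stays in the compact annulus $\{1/C_0 \le |y| \le C_0\}$. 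Since $\varphi\in C^\infty(0,+\infty)$ is positive and $G\in C^\infty(\R^n\setminus 0)$ is positive, both attain uniformly positive upper and lower bounds on these compact sets; likewise $0<f_{\min}\le f\le f_{\max}$ on $\uS$.

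First I would handle the \emph{denominator}. Using $h\ge 1/C_0$, $f\ge f_{\min}$, and $\varphi(h)\le \max_{[1/C_0,C_0]}\varphi =: \varphi_{\max}$, one gets the lower bound
\begin{equation*}
  \int_{\uS} \frac{f(x)\, h(x,t)}{\varphi(h)}\,\dd x \;\ge\; \frac{f_{\min}}{C_0\,\varphi_{\max}}\,\abs{\uS} \;>\;0,
\end{equation*}
and symmetrically, using $h\le C_0$, $f\le f_{\max}$, $\varphi(h)\ge \varphi_{\min}>0$, an upper bound of the same form. So the denominator is pinched between two positive constants independent of $t$. The \emph{numerator} requires the extra ingredient that $\mathcal{K}$ — equivalently $\det(\nabla^2 h + hI)$ — does not degenerate in either direction; but at this stage of the paper we do \emph{not} yet have curvature bounds. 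The key observation is that the numerator equals a geometric quantity that is controlled purely by the support-function bounds: indeed
\begin{equation*}
  \int_{\uS} \frac{h(x,t)}{\mathcal{K}(x,t)}\,\dd x \;=\; \int_{\uS} h(x,t)\det(\nabla^2 h + hI)\,\dd x \;=\; n\,\VOL(K_t),
\end{equation*}
the classical formula for the volume of a convex body in terms of its support function. Since $1/C_0 \le h \le C_0$ gives $B_{1/C_0}\subset K_t \subset B_{C_0}$, we have $n\omega_n/C_0^{\,n} \le n\VOL(K_t) \le n\omega_n C_0^{\,n}$, where $\omega_n = \VOL(B_1)$. Inserting the pointwise bounds $G_{\min} \le G(\delbar h) \le G_{\max}$ (valid on the annulus) then sandwiches the numerator:
\begin{equation*}
  G_{\min}\,\frac{n\omega_n}{C_0^{\,n}} \;\le\; \int_{\uS} \frac{G(\delbar h)\, h}{\mathcal{K}}\,\dd x \;\le\; G_{\max}\,n\omega_n C_0^{\,n}.
\end{equation*}
Dividing the numerator bounds by the denominator bounds yields $1/C \le \eta(t)\le C$ for a constant $C$ depending only on $n$, $C_0$, $f_{\min}$, $f_{\max}$, and the extrema of $\varphi$, $G$ over the relevant compact sets, none of which depend on $t$.

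The only mildly subtle point — and the place I'd be careful — is avoiding any circular appeal to curvature estimates: one must route the numerator through the volume identity $\int_{\uS} h/\mathcal{K}\,\dd x = n\VOL(K_t)$ rather than trying to bound $1/\mathcal{K}$ pointwise. Everything else is a direct substitution of the $C^0$ bounds from Lemma \ref{lem03} into \eqref{eta}, using continuity of $\varphi$ and $G$ on compact sets to extract the uniform positive constants. Note also that the argument is insensitive to whether \eqref{cond1} or \eqref{cond2} holds, since $\eta(t)$ has the same formula in both cases and Lemma \ref{lem03} supplies the support-function bounds in both cases.
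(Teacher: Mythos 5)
Your proof is correct and follows essentially the same route as the paper: the denominator is bounded directly from the $C^0$ bounds of Lemma \ref{lem03}, and the numerator is controlled without any curvature estimate by the Jacobian identity $\frac{h}{\mathcal{K}}\dd x=\rho^n\dd u$, which the paper uses as a change of variables to get $\int_{\uS}G(\rho u)\rho^n\dd u$ and which you invoke in the equivalent form $\int_{\uS} h/\mathcal{K}\,\dd x = n\VOL(K_t)$ after first extracting $G$ by its extrema on the annulus. The difference is purely presentational, so no further comparison is needed.
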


\begin{proof}
Applying the integration by substitution $\frac{h}{\mathcal{K}}\dd x=\rho^n\dd
u$, we have
\begin{equation*}
  \int_{\uS} G(\delbar h) h(x,t) /\mathcal{K} \,\dd x
  =\int_{\uS} G(\rho u) \rho(u,t)^n \dd u.
\end{equation*}
Inserting it into the definition of $\eta(t)$ given in \eqref{eta}, we obtain
\begin{equation*} 
  \eta(t) 
  = \frac{\displaystyle\int_{\uS} G(\rho u) \rho(u,t)^n \dd u}
  {\displaystyle\int_{\uS} f(x) h(x,t)/ \varphi(h) \dd x}.
\end{equation*}
Now the conclusion of this lemma follows directly from Lemma \ref{lem03}.
\end{proof}

Due to the convexity of $M_t$, Lemma \ref{lem03} also implies the gradient
estimates of $h(\cdot,t)$ and $\rho(\cdot,t)$. 

\begin{lemma}\label{lem05}
We have
\begin{gather*}
  |\nabla h(x,t)| \leq C,
  \quad \forall (x,t) \in \mathbb{S}^{n-1} \times [0, T), \\
  |\nabla \rho(u,t)| \leq C,
  \quad \forall (u,t) \in \mathbb{S}^{n-1} \times [0, T),
\end{gather*}
where $C$ is a positive constant depending only on the constant in Lemma \ref{lem03}.
\end{lemma}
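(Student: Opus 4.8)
The plan is to derive both estimates directly from the convexity of $M_t$ and the two-sided bound on $h(\cdot,t)$ furnished by Lemma \ref{lem03}; no parabolic machinery is needed, since for a convex body trapped between two concentric balls the support and radial functions are automatically Lipschitz. I would first treat the support function. By \eqref{Xh} the boundary point of $K_t$ with outer unit normal $x$ is $X(x,t)=\nabla h(x,t)+h(x,t)x$, and since $\nabla h(x,t)$ is a tangent vector to $\uS$ at $x$ it is orthogonal to $x$; hence $|X(x,t)|^{2}=|\nabla h(x,t)|^{2}+h(x,t)^{2}$. Lemma \ref{lem03} gives $K_t\subset B_C$, so $|X(x,t)|\le C$, and therefore
\[
  |\nabla h(x,t)|^{2}\le C^{2}-h(x,t)^{2}\le C^{2},
\]
which is the first inequality, with the same constant $C$ as in Lemma \ref{lem03}.

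For the radial function, fix $t$ and let $u$ and $x$ be related through \eqref{eq:8}, so that $\rho(u,t)u=X(x,t)$ and $x$ is given by \eqref{eq:9}. Taking the inner product of \eqref{eq:9} with $u$ and using that $\nabla\rho(u,t)$ is tangent to $\uS$ at $u$, hence orthogonal to $u$, yields
\[
  \langle x,u\rangle=\frac{\rho(u,t)}{\sqrt{\rho(u,t)^{2}+|\nabla\rho(u,t)|^{2}}}.
\]
Combining this with $h(x,t)=\rho(u,t)\langle x,u\rangle$ from \eqref{eq:2} gives
\[
  \sqrt{\rho(u,t)^{2}+|\nabla\rho(u,t)|^{2}}=\frac{\rho(u,t)^{2}}{h(x,t)}\le\frac{C^{2}}{1/C}=C^{3},
\]
again by Lemma \ref{lem03} (using $\rho(u,t)\le C$ and $h(x,t)\ge 1/C$), whence $|\nabla\rho(u,t)|\le C^{3}$.

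There is no genuine obstacle here; the only points that require a little care are the two orthogonality relations $\nabla h(x)\perp x$ and $\nabla\rho(u)\perp u$ (both valid because these gradients are computed on $\uS$), the positivity of $\langle x,u\rangle$, which holds because the origin lies in the interior of $K_t$ so that $\langle x,\rho(u)u\rangle=h(x)>0$, and the bookkeeping that makes every constant trace back to the single constant of Lemma \ref{lem03}. Equivalently, one may simply invoke the standard fact that the support and radial functions of a convex body $K$ with $B_{1/C}\subset K\subset B_C$ are Lipschitz with constant depending only on $C$; the computation above merely renders that dependence explicit.
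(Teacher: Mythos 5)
Your proof is correct and follows essentially the same route as the paper: both rest on the identities $|\nabla h|^2 + h^2 = \rho^2$ (from $X=\nabla h + hx$ with $\nabla h\perp x$) and $h=\rho^2/\sqrt{\rho^2+|\nabla\rho|^2}$ (from \eqref{eq:9} and \eqref{eq:2}), and then invoke the two-sided bounds of Lemma \ref{lem03}. The only difference is bookkeeping, not substance.
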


\begin{proof}
By virtue of \eqref{eq:8}, there is
\begin{equation*}
  \rho^2=|\nabla h|^2+h^2,
\end{equation*}
which implies that
\begin{equation*}
|\nabla h|\leq \rho.
\end{equation*}
By \eqref{h}, \eqref{eq:8} and \eqref{eq:9}, we have
\begin{equation*}
h= \frac{\rho^2}{\sqrt{\rho^{2}+|\nabla \rho|^{2}}},
\end{equation*}
which implies that
\begin{equation*}
  |\nabla\rho|
  =\frac{|\nabla h|}{h}\rho
  \leq \frac{\rho^2}{h}.
\end{equation*}
The estimates of this lemma now follow directly from Lemma \ref{lem03}.
\end{proof}

\section{Long-time existence of the flow}

In this section, the long-time existence of the flow \eqref{flow}, or
equivalently \eqref{ht}, will be proved. 
In addition to the uniform bounds of support functions and their gradients
proved in the previous section,
we further need to establish uniform upper and lower bounds for
principal curvatures. 
These estimates can be obtained by considering proper auxiliary functions; see 
\cite{CHZ.MA.373-2019.953, LSW.JEMSJ.22-2020.893, LL.TAMS.373-2020.5833}
for similar techniques.

In the rest of this section, we take a local orthonormal frame $\{e_{1},
\cdots, e_{n-1}\}$ on $\mathbb{S}^{n-1} $ such that the standard metric on
$\mathbb{S}^{n-1} $ is $\{\delta_{ij}\}$, 
and double indices always mean to sum from $1$ to $n-1$.

By Lemma \ref{lem03}, along the flow \eqref{flow}, $h$ and $\delbar h$ always
range within a bounded interval $I'=[1/C,C]$ and a bounded annulus
$\Omega=\set{y\in\R^n : 1/C\leq|y|\leq C}$ respectively, where $C$ is the constant
in the lemma.

We first derive the uniform upper bound of the Gauss curvature of $M_t$.

\begin{lemma}\label{lem06}
There exists a positive constant $C$
independent of $t$, such that 
\begin{equation*}
  \mathcal{K}(x,t) \leq C,
  \quad \forall (x,t) \in \mathbb{S}^{n-1} \times [0, T).
\end{equation*}
\end{lemma}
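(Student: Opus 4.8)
The plan is to bound the Gauss curvature $\mathcal{K}$ from above by establishing a positive lower bound for the smallest principal radius of curvature, i.e.\ for the smallest eigenvalue of $b_{ij}=\nabla_{ij}h+h\delta_{ij}$, since $\mathcal{K}=1/\det(b_{ij})$. Equivalently, I would show that $\Delta h+(n-1)h$ (or a suitable barrier built from the $b_{ij}$) stays bounded below away from $0$; but the cleaner route, following \cite{CHZ.MA.373-2019.953, LSW.JEMSJ.22-2020.893, LL.TAMS.373-2020.5833}, is to directly bound $\mathcal{K}$ from above by a maximum-principle argument applied to an auxiliary function. A natural candidate is
\begin{equation*}
  Q(x,t)=\frac{\mathcal{K}(x,t)}{h(x,t)-\varepsilon_0},
\end{equation*}
where $\varepsilon_0$ is chosen (using Lemma \ref{lem03}) so that $0<\varepsilon_0<1/C\le h$; alternatively one uses $Q=(-\pd_t h+h)/(h-\varepsilon_0)$, which by \eqref{ht} is a positive multiple of $\mathcal{K}$ times a function of $h$ and $\delbar h$ that is pinched between two positive constants on $I'\times\Omega$ (here Lemma \ref{lem03}, Lemma \ref{lem04}, and the continuity and positivity of $\varphi$, $G$, $f$ are all used).

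The key steps, in order: (1) Record the evolution equation of $b_{ij}$ along the flow \eqref{ht}. Differentiating \eqref{ht} twice and using the commutation identities on $\uS$, one gets a parabolic equation for $b_{ij}$ of the form $\pd_t b_{ij}=\mathcal{K}\,b^{kl}\nabla_{kl}b_{ij}+(\text{lower order in }b)$, where the lower-order terms involve $\nabla b$, $\varphi$, $G$, $f$ and their derivatives evaluated on the compact sets $I'$ and $\Omega$, hence are controlled. (2) From this, derive the evolution equation of $\mathcal{K}=1/\det(b_{ij})$: $\pd_t\mathcal{K}=\mathcal{K}\,b^{kl}\nabla_{kl}\mathcal{K}+\mathcal{K}\,(\text{terms involving }b^{ij},\nabla b,\mathcal{K})$. (3) Compute the evolution equation of the auxiliary quotient $Q$, and evaluate it at a spatial maximum point $x_t$ of $Q(\cdot,t)$, where $\nabla Q=0$ and $b^{kl}\nabla_{kl}Q\le0$. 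At that point $\nabla\mathcal{K}=\mathcal{K}\,\nabla h/(h-\varepsilon_0)$, which lets one substitute and kill the worst gradient terms. (4) Use $\sum b^{ii}\ge (n-1)\det(b^{ij})^{1/(n-1)}=(n-1)\mathcal{K}^{1/(n-1)}$ (AM--GM on the eigenvalues of $b^{ij}$) together with the bounds $h,|\nabla h|\le C$, $1/C\le\mathcal{K}^{-1/(n-1)}\cdots$ to see that at the maximum the terms with the highest power of $\mathcal{K}$ on the right-hand side, roughly of the form $\mathcal{K}\cdot\mathcal{K}^{1/(n-1)}$ with a negative sign (coming from $h-\varepsilon_0>0$ and the structure of the zero-order term in \eqref{ht}), dominate the positive terms, forcing $\mathcal{K}(x_t,t)\le C$. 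This yields $\max_{\uS}Q(\cdot,t)\le C$, and since $h-\varepsilon_0\le C$, also $\mathcal{K}(\cdot,t)\le C$ uniformly in $t$.

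The main obstacle is step (3)--(4): one must choose the auxiliary function (the shift $\varepsilon_0$, or possibly an extra factor like $e^{\lambda t}$ or a power of $h$) so that after substituting the maximum-point relations the sign works out, i.e.\ the destabilizing zero-order contributions from the inhomogeneous factors $\varphi(h)$, $G(\delbar h)$ and their first and second derivatives—which here, unlike in the single-weight setting of \cite{CHZ.MA.373-2019.953}, come from \emph{two} functions—are absorbed by the good negative term. All such terms are bounded since $h\in I'$ and $\delbar h\in\Omega$ by Lemma \ref{lem03}, and $\eta(t)$ is pinched by Lemma \ref{lem04}, so the difficulty is bookkeeping rather than a genuine analytic obstruction: one carefully collects the coefficients of each power of $\mathcal{K}$ and of $|\nabla b|^2$ and checks that, at a maximum of $Q$, the inequality closes. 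I would expect the $|\nabla b|^2$-type terms to be handled by Cauchy--Schwarz against the good second-order term $b^{kl}\nabla_{kl}Q\le0$ and the remaining polynomial-in-$\mathcal{K}$ inequality to give the bound.
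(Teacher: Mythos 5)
Your proposal is correct and essentially the paper's own argument: the paper uses precisely your second candidate, $W=(-\pd_t h+h)/\bigl(\eta(t)(h-\varepsilon_0)\bigr)$ with $\varepsilon_0=\tfrac12\inf h$, which by \eqref{ht} and Lemmas \ref{lem03}--\ref{lem04} is pinched against $\mathcal{K}$, and at a spatial maximum it combines $W_i=0$, $W_{ij}\le0$ with $\operatorname{tr}(b^{ij})\ge(n-1)\mathcal{K}^{1/(n-1)}$ to reach $\pd_t W\le W^{2}\bigl[C-C'\,W^{1/(n-1)}\bigr]$, hence a uniform bound. The only difference is cosmetic: the paper never derives the full parabolic evolution equations of your steps (1)--(2), instead extracting the needed inequality for $\pd_t b_{ij}$ (and then for $\pd_t\mathcal{K}$) directly from $W_{ij}\le0$ at the maximum point.
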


\begin{proof}
Consider the following auxiliary function
\begin{equation*}
  W(x, t) =\frac{-\pd_th(x,t)+h(x,t)}{\eta(t)[h(x,t)-\varepsilon_0]},
\end{equation*}
where
\[ \varepsilon_0 =\frac{1}{2}\,\inf_{\uS\times[0,T)} h(x, t) \]
is a positive constant by virtue of Lemma \ref{lem03}.

For each $t\in[0,T)$, assume $W(\cdot,t)$ attains its maximum at some point
$x_t\in\uS$, namely
\begin{equation*}
W(x_t,t)=\max_{x\in\uS}W(x,t).
\end{equation*}
Then we have at $(x_t, t)$ that
\begin{equation}\label{K-1}
  0=\eta(t)W_i
  =\frac{-\pd_t h_i+h_i}{h-\varepsilon_0}
  +\frac{\pd_th- h}{(h-\varepsilon_0)^2}h_i,
\end{equation}
and
\begin{equation}\label{K-2}
  0\geq \eta(t)W_{ij}
  =\frac{-\pd_th_{ij}+h_{ij}}{h-\varepsilon_0}
  +\frac{(\pd_th-h)h_{ij}}{(h-\varepsilon_0)^2},
\end{equation}
where $W_i=0$ has been used in computing $W_{ij}$, and $W_{ij}\leq0$ should be
understood in the sense of negative semi-definite matrix.
As in the Introduction, write $b_{ij}=h_{ij}+h\delta_{ij}$, and $b^{ij}$ its
inverse matrix.
By \eqref{K-2}, we have
\begin{equation*}
  \begin{split}
    \pd_tb_{ij}
    &=\pd_th_{ij}+\pd_th\delta_{ij} \\
    &\geq h_{ij} +\frac{\pd_th-h}{h-\varepsilon_0}h_{ij} +\pd_th\delta_{ij} \\
    &= b_{ij} +\frac{\pd_th-h}{h-\varepsilon_0}h_{ij} +(\pd_th-h)\delta_{ij} \\
    &= b_{ij} +\frac{\pd_th-h}{h-\varepsilon_0}(h_{ij} +h\delta_{ij}-\varepsilon_0\delta_{ij}) \\
    &= b_{ij} -\eta(t) W (b_{ij} -\varepsilon_0\delta_{ij}).
\end{split}
\end{equation*}
Recalling $\mathcal{K}=1/\det(b_{ij})$, we obtain
\begin{equation}\label{eq:12}
  \begin{split}
    \pd_t\mathcal{K}
    &= -\mathcal{K}b^{ji}\pd_tb_{ij} \\
    &\leq -\mathcal{K}b^{ji}[b_{ij} -\eta(t) W (b_{ij} -\varepsilon_0\delta_{ij})] \\
    &= -\mathcal{K}\bigl[(n-1)(1 -\eta(t) W) +\eta(t) W\varepsilon_0 \TR(b^{ij})\bigr].
  \end{split}
\end{equation}

On the other hand, 
by \eqref{ht}, $W$ can be also written as
\begin{equation}\label{eq:10}
  W(x,t)
  = \frac{f(x)h}{(h-\varepsilon_0)\varphi(h) G(\delbar h)} \mathcal{K}(x,t).
\end{equation}
Then we have by Lemma \ref{lem03} that
\begin{equation} \label{eq:11}
  \frac{1}{C_1} W(x,t)\leq \mathcal{K}(x,t)\leq C_1 W(x,t),
\end{equation}
where $C_1$ is a positive constant depending only on the constant $C$ in Lemma
\ref{lem03}, and the upper and lower bounds of $f$ on $\uS$, $\varphi$ on $I'$,
and $G$ on $\Omega$. 
Noting that
\begin{equation*}
  \frac{1}{n-1}\TR(b^{ij})
  \geq \det(b^{ij})^{\frac{1}{n-1}}
  =\mathcal{K}^{\frac{1}{n-1}},
\end{equation*}
 and combining Lemma
\ref{lem04} and the inequalities \eqref{eq:12} and \eqref{eq:11}, we obtain 
\begin{equation}\label{eq:15}
  \begin{split}
    \pd_t\mathcal{K}
    &\leq (n-1) \eta(t) \mathcal{K} W -(n-1)\eta(t)\varepsilon_0 W \mathcal{K}^{\frac{n}{n-1}} \\
    &\leq C_2 W^2 -C_3 W^{\frac{2n-1}{n-1}},
  \end{split}
\end{equation}
where $C_2$ and $C_3$ are positive constants depending only on $n$,
$\varepsilon_0$, $C_1$, and the constant $C$ in Lemma \ref{lem04}.

Now we can estimate $\pd_tW$.
Let us begin with the following
\begin{equation}\label{eq:16}
  \begin{split}
    \frac{\pd}{\pd t} \Bigl[ & \frac{h}{(h-\varepsilon_0)\varphi(h) G(\delbar h)}  \Bigr] \\
    &= \frac{\pd_th}{(h-\varepsilon_0)\varphi G}
    -\frac{h\left( \pd_th\varphi G
        +(h-\varepsilon_0)\varphi'\pd_th G
        +(h-\varepsilon_0)\varphi \pd_t[G(\delbar h)] \right)}{[(h-\varepsilon_0)\varphi G]^2} \\
    &\leq C_4 \left( |\pd_th| +|\pd_t[G(\delbar h)]| \right),
  \end{split}
\end{equation}
where $C_4$ is a positive constant depending only on 
$\varepsilon_0$, the constant $C$ in Lemma \ref{lem03},
$\norm{\varphi}_{C^1(I')}$, and the lower bounds of
$\varphi$ on $I'$, and $G$ on $\Omega$.
To compute $\pd_t[G(\delbar h)]$,
we recall \eqref{K-1}, namely
\begin{equation*}
  \pd_t h_i =(1 -\eta W) h_i.
\end{equation*}
Then
\begin{equation*}
  \begin{split}
    \pd_t\delbar h
    &= \pd_t \Bigl( \sum_i h_ie_i+hx \Bigr) =\sum_i \pd_t h_ie_i+\pd_th x \\
    &= (1 -\eta W) \sum_i h_ie_i+[(1-\eta W)h+\eta W\varepsilon_0] x \\
    &= (1 -\eta W) \delbar h +\eta W\varepsilon_0 x.
  \end{split}
\end{equation*}
Therefore,
\begin{equation*} 
  \begin{split}
    \pd_t[G(\delbar h)]
    &= \langle \delbar G,\pd_t \delbar h \rangle \\
    &= (1 -\eta W) \langle \delbar G,\delbar h \rangle
    + \eta W\varepsilon_0 \langle \delbar G,x \rangle.
  \end{split}
\end{equation*}
Now recalling the definition of $W$, we have
\begin{equation*}
  \begin{split}
    |\pd_th| &+|\pd_t[G(\delbar h)]| \\
    &\leq |(1-\eta W)h+\eta W\varepsilon_0|
    + |1 -\eta W| \cdot |\delbar G| \cdot |\delbar h|
    + \eta W\varepsilon_0 |\delbar G| \\
    &\leq C_5(1+W),
  \end{split}
\end{equation*}
where $C_5$ is a positive constant depending only on 
the constants in Lemmas \ref{lem03} and \ref{lem04}, and
$\norm{G}_{C^1(\Omega)}$.
Inserting it into \eqref{eq:16},
we obtain that
\begin{equation*}
  \frac{\pd}{\pd t} \Bigl[ \frac{h}{(h-\varepsilon_0)\varphi(h) G(\delbar h)} \Bigr]
  \leq C_4C_5(1+W).
\end{equation*}
By virtue of \eqref{eq:10}, and note \eqref{eq:15}, at $(x_t,t)$, there is
\begin{equation}\label{eq:14}
  \begin{split}
    \pd_tW
    &= \frac{\pd}{\pd t}\Bigl[ \frac{h}{(h-\varepsilon_0)\varphi(h) G(\delbar h)} \Bigr]f\mathcal{K}
    + \frac{hf\pd_t\mathcal{K}}{(h-\varepsilon_0)\varphi(h) G(\delbar h)} \\
    &\leq C_4C_5(1+W)f\mathcal{K}
    + \frac{W}{\mathcal{K}}
    \bigl(
    C_2 W^2 -C_3 W^{\frac{2n-1}{n-1}}
    \bigr) \\
    &\leq C_4C_5f_{\max}C_1(W+W^2)
    +C_1C_2W^2
    -C_1^{-1}C_3 W^{\frac{2n-1}{n-1}} \\
    &= W^{2} \bigl[ C_4C_5f_{\max}C_1(W^{-1}+1)
    +C_1C_2
    -C_1^{-1}C_3 W^{\frac{1}{n-1}}\bigr],
  \end{split}
\end{equation}
where we have used \eqref{eq:11} to obtain the second inequality.
Now one can see that whenever $W(x_t,t)$ is greater than some constant which is
independent of $t$,
\begin{equation*}
\pd_tW<0,
\end{equation*}
which implies that $W$ has a uniform upper bound.
By \eqref{eq:11}, $\mathcal{K}$ has a uniform upper bound.
\end{proof}

Now we can estimate lower bounds of principal curvatures $\kappa_{i}(x,t)$ of $M_t$
for $i=1,\cdots, n-1$.

\begin{lemma}\label{lem07}
  We have
  \begin{equation*}
  \kappa_{i}(x,t) \geq C, \quad \forall (x,t)\in\uS\times[0,T),
  \end{equation*}
  where $C$ is a positive constant independent of $t$.
\end{lemma}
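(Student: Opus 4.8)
The plan is to establish a uniform \emph{upper} bound for the principal radii of curvature $b_{ij}=\nabla_{ij}h+h\delta_{ij}$, which by Lemma \ref{lem06} (lower bound on $\mathcal K$, i.e.\ upper bound on $\det b_{ij}$) and the uniform bound on $h$ from Lemma \ref{lem03} is equivalent to the claimed lower bound on the principal curvatures $\kappa_i=1/b_{ii}$ in an eigenbasis. To this end I would introduce the auxiliary function
\begin{equation*}
  Z(x,t)=\max_{\xi\in\uS}\,\bigl(b_{ij}(x,t)\,\xi^i\xi^j\bigr)\,\exp\bigl(\psi(h(x,t))\bigr),
\end{equation*}
for a suitable one-variable function $\psi$ (for instance $\psi(h)=-A h$ or $\psi(h)=A/h$ with $A$ large, to be fixed), and track the largest eigenvalue $\lambda_{\max}=b_{ij}\xi^i\xi^j$ of $b_{ij}$. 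For each $t$ choose $(x_t,\xi_t)$ realizing the maximum of $Z$; after rotating the frame we may assume $b_{ij}(x_t,t)$ is diagonal and $\lambda_{\max}=b_{11}$. At an interior spatial maximum the first derivatives of $\log Z$ vanish and the Hessian is negative semidefinite, giving the usual relations $\nabla_k b_{11}=-b_{11}\psi'(h)\nabla_k h$ and $\nabla_{kk}b_{11}\le -b_{11}\psi'\nabla_{kk}h-b_{11}(\psi''-\psi'^2)|\nabla_k h|^2$ at $(x_t,t)$.

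The core computation is the evolution equation for $b_{ij}$ along the flow \eqref{ht}. Differentiating \eqref{ht} twice in space and using \eqref{A}, one gets a parabolic equation of the form
\begin{equation*}
  \partial_t b_{ij}
  = \eta(t)\,\Phi\,\mathcal K\,b^{kl}\nabla_{kl}b_{ij}
  + (\text{lower-order terms in }b,\nabla b,h,\nabla h)
  + b_{ij},
\end{equation*}
where $\Phi=f(x)h/(\varphi(h)G(\delbar h))$ and the operator $b^{kl}\nabla_{kl}$ is the linearization of $\log\det$; here one uses $\mathcal K=1/\det b_{ij}$ and the standard commutation identity $\nabla_{kl}b_{ij}=\nabla_{ij}b_{kl}-b_{kl}\delta_{ij}+b_{ij}\delta_{kl}$ on $\uS$ (which feeds the $+b_{ij}$ and related zeroth-order terms). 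Applying this at $(x_t,t)$ to $b_{11}$, substituting the critical-point relations above, and invoking Lemmas \ref{lem03}, \ref{lem04}, \ref{lem05}, \ref{lem06} to bound $h$, $\nabla h$, $\eta$, $\mathcal K$, $\|\varphi\|_{C^2(I')}$ and $\|G\|_{C^2(\Omega)}$, one obtains at the maximum point a differential inequality of the schematic form $\partial_t \log Z \le C - \varepsilon\, b^{11}\,b_{11}\,(\text{something})$ — more precisely, the good negative term $\eta\Phi\mathcal K\,b^{kk}$ arising from the trace of the commutator, together with the concavity term from a correctly chosen $\psi$, dominates the positive gradient and curvature terms once $\lambda_{\max}=b_{11}$ is large. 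Hence $\partial_t \log Z<0$ whenever $Z$ (equivalently $\lambda_{\max}$) exceeds a $t$-independent constant, so $Z$, and therefore all $b_{ij}$, stay uniformly bounded above on $[0,T)$.

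The main obstacle I anticipate is controlling the terms generated by the two inhomogeneous functions $\varphi(h)$ and $G(\delbar h)$: differentiating $\Phi=f h/(\varphi(h)G(\delbar h))$ twice in the spatial variables produces terms involving $\nabla^2 h$, i.e.\ terms linear in $b_{ij}$ (through $\delbar h=\nabla h+hx$ and $A_{ij}=b_{ij}$), which are precisely of the same order as the largest eigenvalue we are trying to bound. The trick is that such first-order-in-$b$ contributions carry a coefficient bounded in terms of $\|\varphi\|_{C^1}$, $\|G\|_{C^1}$ and $h,\nabla h$ only, so after dividing by $\lambda_{\max}$ they are absorbed; the genuinely dangerous second-order terms either cancel by the commutation formula or are killed by choosing the constant $A$ in $\psi$ large enough. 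Verifying that this absorption indeed goes through — in particular checking the sign and size of every term coming from $\partial_t[G(\delbar h)]$ and from the critical-point substitution — is the delicate bookkeeping step, but it is structurally the same as in \cite{CHZ.MA.373-2019.953, LSW.JEMSJ.22-2020.893, LL.TAMS.373-2020.5833} and presents no conceptual difficulty beyond that already handled in Lemma \ref{lem06}.
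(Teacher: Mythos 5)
Your overall strategy is the same as the paper's: bound $\lambda_{\max}(b_{ij})$ from above by a maximum-principle argument for an auxiliary function built from $\lambda_{\max}$ and lower-order quantities, using the critical-point relations and Lemmas \ref{lem03}--\ref{lem06}. However, your specific auxiliary function $Z=\lambda_{\max}(b_{ij})e^{\psi(h)}$ is not adequate, and the gap sits exactly at the point you wave away. Differentiating $G(\delbar h)$ twice in space does not produce terms ``linear in $b_{ij}$'': since $\delbar_i\delbar h=\sum_k b_{ki}e_k$, the chain rule gives the term $\sum_{k,l}\langle\langle\delbar^2G,e_l\rangle,e_k\rangle b_{lj}b_{ki}$, which is \emph{quadratic} in $b$ and has no sign (the hypotheses on $G$ are only positivity, smoothness, evenness and integrability). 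In the maximum-principle computation this produces, after multiplying by $b^{11}$, a bad term of size $C_3\,b_{11}$ with $C_3$ depending on $\norm{G}_{C^2(\Omega)}$ and the bounds of Lemma \ref{lem03}; dividing by $\lambda_{\max}$ does not absorb it. The good terms available in your scheme --- the commutator trace term, which is of order $\sum_i b^{ii}$, and the terms generated by $\psi(h)$, which are of order $\sum_i b^{ii}$ or bounded --- do not grow with $b_{11}$ (an upper bound on $\mathcal K$ only gives $\det b\ge 1/C$, which forces nothing about $\TR(b^{ij})$ when $b_{11}$ is large), and enlarging $A$ in $\psi$ enlarges only these non-growing terms. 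So ``$\pd_t\log Z<0$ once $\lambda_{\max}$ is large'' does not follow from your setup.

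The paper's proof fixes precisely this by taking $\widetilde{\Lambda}=\log\lambda_{\max}(b_{ij})-A\log h+B|\nabla h|^2$: the second-order contribution of the $B|\nabla h|^2$ term produces the good piece $-2B\sum_i b_{ii}\le -2Bb_{11}$, which dominates the $C_3 b_{11}$ term once $B$ is chosen large relative to $C_3$, while its first-order contribution, combined with the critical-point identity $b^{11}b_{11;k}-Ah_k/h+2B\sum_j h_jh_{jk}=0$, neutralizes the third-order term $\sum_k\langle\delbar G,e_k\rangle b_{11;k}/G$ coming from $\nabla^2[G(\delbar h)]$ (replacing it by the bounded quantity $A\langle\delbar G,\nabla h\rangle/(hG)$); the choice $A=n+2BC^2$ then closes the argument. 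Without a gradient term of this kind (or some substitute mechanism producing a negative term of order $\lambda_{\max}$), the bookkeeping you defer to the cited references does not go through, because the dependence of the speed on the position vector $\delbar h$ is exactly the feature that forces the extra term in the test function.
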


\begin{proof}
We consider the auxiliary function
\begin{equation*}
  \widetilde{\Lambda}(x, t)
  =\log \lambda_{\max}(b_{ij})-A\log h+B |\nabla h|^2,
  \quad \forall (x,t)\in\uS\times[0,T),
\end{equation*}
where $b_{ij}=h_{ij}+h\delta_{ij}$ as before, $\lambda_{\max}(b_{ij})$ denotes
the maximal eigenvalue of the matrix $(b_{ij})$, and $A$ and $B$ are positive
constants to be chosen later. 
If we can prove that $\widetilde{\Lambda}$ has a uniform upper bound, then by
Lemmas \ref{lem03} and \ref{lem05}, $\lambda_{\max}(b_{ij})$ has a uniform upper
bound, which further implies the conclusion of this lemma.

For any fixed $T'\in(0,T)$, there exists some point
$(x_0,t_0)\in\uS\times[0,T']$ such that
\begin{equation}\label{eq:17}
\widetilde{\Lambda}(x_0,t_0) =\max_{\uS\times[0,T']} \widetilde{\Lambda}(x,t).
\end{equation}
By choosing a suitable orthonormal frame, we may assume $(b_{ij}(x_0, t_0))$ is
diagonal and $\lambda_{\max}(b_{ij})(x_0, t_0)=b_{11}(x_0, t_0)$.
Now we consider the following new auxiliary function
\begin{equation*}
  \Lambda(x, t)=\log b_{11}-A\log h+B |\nabla h|^2,
  \quad \forall (x,t)\in\uS\times[0,T'].
\end{equation*}
Since $\Lambda(x_0,t_0)=\widetilde{\Lambda}(x_0,t_0)$, $\Lambda(x,t)$ attains its
maximum at $(x_0,t_0)$.
At this point, we have
\begin{equation}\label{C2-1d}
0=\Lambda_i=b^{11}b_{11; i}-A\frac{h_i}{h}+2B \sum_{k}h_k h_{ki},
\end{equation}
and
\begin{equation} \label{C2-2d}
  \begin{split}
    0\geq \Lambda_{ij}
    &=b^{11}b_{11; ij}-(b^{11})^2 b_{11; i}b_{11; j}
    -A\Bigl(\frac{h_{ij}}{h}-\frac{h_i h_j}{h^2}\Bigr) \\
    &\hskip1.1em +2B \sum_{k}(h_{kj} h_{ki}+h_kh_{kij}),
  \end{split}
\end{equation}
where $(b^{ij})$ denotes the inverse of the matrix $(b_{ij})$, and
$\Lambda_{ij}\leq0$ means that it is a negative semi-definite matrix.
Without loss of generality, we can assume $t_0>0$.
Then at $(x_0,t_0)$, we also have
\begin{equation} \label{eq:13}
  \begin{split}
  0\leq\pd_t\Lambda
  &=b^{11}\pd_tb_{11} -A\frac{\pd_th}{h} +2B\sum_k h_k\pd_th_k \\
  &=b^{11}(\pd_th_{11}+\pd_th) -A\frac{\pd_th}{h} +2B\sum_k h_k\pd_th_k.
  \end{split}
\end{equation}

We rewrite the flow \eqref{ht} as
\begin{equation*}
 \log(h- \pd_th)
  = \log \mathcal{K}(x,t) +\log\eta(t) + \log\Bigl[ \frac{f(x) h}{\varphi(h) G(\delbar h)}\Bigr].
\end{equation*}
Denoting
\begin{equation*}
  \alpha(x,t)
  = \log\Bigl[ \frac{f(x) h}{\varphi(h) G(\delbar h)}\Bigr],
\end{equation*}
and differentiating the both sides, we have at $(x_0,t_0)$ that
\begin{equation}\label{C2-00}
\frac{h_k-\pd_th_k}{h-\pd_th}=-b^{ji}b_{ij; k}+\alpha_k,
\end{equation}
and
\begin{equation}\label{C2-11}
  \frac{h_{11}-\pd_th_{11}}{h-\pd_th}
  =\frac{(h_{1}-\pd_th_{1})^2}{(h-\pd_th)^2}
  -b^{ii}b_{ii; 11} +b^{ii}b^{jj}(b_{ij; 1})^2 +\alpha_{11}.
\end{equation}
By the Ricci identity, there is
\begin{equation*}
b_{ii; 11}=b_{11; ii}-b_{11}+b_{ii}.
\end{equation*}
Multiplying both sides of \eqref{C2-11} by $-b^{11}$, we obtain
\begin{equation}\label{eq:19}
  \begin{split}
  \frac{b^{11}\pd_th_{11}-b^{11}h_{11}}{h-\pd_th}
  &\leq b^{11}b^{ii}b_{ii; 11} -b^{11}b^{ii}b^{jj}(b_{ij; 1})^2 -b^{11}\alpha_{11} \\
  &\leq b^{11}b^{ii}b_{11; ii} -b^{11}b^{ii}b^{11}(b_{i1; 1})^2 -\sum_i b^{ii} \\
  &\hskip1.1em +b^{11}(n-1-\alpha_{11}).
  \end{split}
\end{equation}
From \eqref{C2-2d}, there is $b^{ij}\Lambda_{ij}\leq0$, namely
\begin{equation*}
  \begin{split}
    b^{11}b^{ii}b_{11; ii}-(b^{11})^2b^{ii} (b_{11; i})^2
    \leq
    Ab^{ii}\Bigl(\frac{h_{ii}}{h}-\frac{h_i^2}{h^2}\Bigr)
    -2B \sum_{k}b^{ii}(h_{ki}^2+h_kh_{kii}).
  \end{split}
\end{equation*}
Note that
\begin{gather*}
  b^{ii}h_{ii}
  =b^{ii}(b_{ii}-h)
  =n-1-h\sum_i b^{ii}, \\
  \sum_{k}b^{ii}h_{ki}^2
  =b^{ii}h_{ii}^2
  =b^{ii}(b_{ii}^2-2hb_{ii}+h^2)
  = -2(n-1)h+\sum_i b_{ii} +h^2\sum_i b^{ii},  \\
  \sum_{k}b^{ii}h_kh_{kii}
  =\sum_{k}b^{ii}h_k(b_{ki;i}-h_i\delta_{ki})
  =\sum_kb^{ii}h_kb_{ii;k} -b^{ii}h_i^2,
\end{gather*}
where the fact that $b_{ij;k}$ is symmetric in all indices is used, therefore
\begin{equation*}
  \begin{split}
    b^{11}b^{ii}b_{11; ii}-(b^{11})^2b^{ii} (b_{11; i})^2
    &\leq
    \frac{(n-1)A}{h} -A \sum_i b^{ii}
    -\frac{Ab^{ii}h_i^2}{h^2} \\
    &\hskip1.2em  +4(n-1)Bh -2B\sum_i b_{ii} -2Bh^2\sum_i b^{ii} \\
    &\hskip1.2em -2B\sum_k b^{ii}h_kb_{ii;k} +2Bb^{ii}h_i^2.
  \end{split}
\end{equation*}
Inserting it into \eqref{eq:19}, we obtain that
\begin{equation}\label{eq:20}
  \begin{split}
    \frac{b^{11}\pd_th_{11}-b^{11}h_{11}}{h-\pd_th}
    &\leq
    \frac{(n-1)A}{h} -(A+2Bh^2+1) \sum_i b^{ii}
    -\frac{A-2Bh^2}{h^2}b^{ii}h_i^2 \\
    &\hskip1.2em  +4(n-1)Bh -2B\sum_i b_{ii} \\
    &\hskip1.2em -2B\sum_k b^{ii}h_kb_{ii;k} +b^{11}(n-1-\alpha_{11}).
  \end{split}
\end{equation}
Recalling \eqref{C2-00}, we have
\begin{equation}\label{eq:18}
\frac{2B\sum_k h_k\pd_th_k}{h-\pd_th}
= \frac{2B|\nabla h|^2}{h-\pd_th} +2B\sum_kb^{ii}h_kb_{ii;k}
-2B\langle \nabla h,\nabla\alpha \rangle.
\end{equation}
Now dividing \eqref{eq:13} by $h-\pd_th$, there is
\begin{equation*}
  \begin{split}
    0 &\leq \frac{b^{11}(\pd_th_{11}-h_{11}+b_{11}-h+\pd_th)}{h-\pd_th}
    -\frac{A\pd_th}{h(h-\pd_th)}
    +\frac{2B\sum_k h_k\pd_th_k}{h-\pd_th}  \\
    &= \frac{b^{11}(\pd_th_{11}-h_{11})}{h-\pd_th}
    +\frac{2B\sum_k h_k\pd_th_k}{h-\pd_th} 
    -b^{11} +\frac{A}{h}-\frac{A-1}{h-\pd_th},
  \end{split}
\end{equation*}
which together with \eqref{eq:20} and \eqref{eq:18} implies that
\begin{equation*}
  \begin{split}
    0
    &\leq \frac{nA}{h} -(A+2Bh^2+1) \sum_i b^{ii}
    -\frac{A-2Bh^2}{h^2}\sum_i b^{ii}h_i^2 \\
    &\hskip1.2em  +4(n-1)Bh -2B\sum_i b_{ii}  +b^{11}(n-2-\alpha_{11}) \\
    &\hskip1.2em -2B\langle \nabla h,\nabla\alpha \rangle 
    -\frac{A-1-2B|\nabla h|^2}{h-\pd_th}.
  \end{split}
\end{equation*}
If we choose $A=n+2BC^2$, where $C$ is the constant in Lemma \ref{lem03}, then
there is
\begin{equation}\label{eq:22}
  (A-n+3) \sum_i b^{ii}
  +2B\sum_i b_{ii} 
  \leq
  C_1(A+B)
  -b^{11}\alpha_{11}
  -2B\langle \nabla h,\nabla\alpha \rangle,
\end{equation}
where $C_1$ is a positive constant depending only on $n$ and the constant $C$
in Lemma \ref{lem03}.

Recall
\begin{equation*}
  \alpha(x,t)
  =\log f +\log h -\log\varphi(h) -\log G(\delbar h),
\end{equation*}
and note that
\begin{equation*}
  \nabla_i [G(\delbar h)]
  =\langle \delbar G, \delbar_i \delbar h \rangle
  =\sum_k\langle \delbar G, e_k \rangle b_{ki},
\end{equation*}
and
\begin{equation*}
  \begin{split}
    \nabla_{ji}^2 [G(\delbar h)]
    &=\langle \langle \delbar^2 G,\delbar_j\delbar h \rangle, \delbar_i\delbar h \rangle
    +\langle \delbar G, \delbar_{ji}^2 \delbar h \rangle \\
    &=\sum_{k,l}\langle \langle \delbar^2 G,e_l \rangle, e_k \rangle b_{lj}b_{ki}
    -\langle \delbar G, x \rangle b_{ji} +\sum_k\langle \delbar G, e_k \rangle b_{ki;j}.
  \end{split}
\end{equation*}
Then we have 
\begin{equation*}
  \begin{split}
    -2B\langle \nabla h,\nabla\alpha \rangle 
    &= -2B \sum_k h_k \left(
      \frac{f_k}{f}
      +\frac{h_k}{h}
      -\frac{\varphi'(h)h_k}{\varphi(h)}
      -\frac{\langle \delbar G, e_l \rangle b_{lk}}{G(\delbar h)} 
    \right) \\
    &\leq C_2B 
    +\sum_k \frac{2Bh_k\langle \delbar G, e_k \rangle h_{kk}}{G(\delbar h)},
  \end{split}
\end{equation*}
where $C_2$ is a positive constant depending only on the constants $C$ in Lemmas
\ref{lem03} and \ref{lem05}, $\norm{f}_{C^1(\uS)}$, $\norm{\varphi}_{C^1(I')}$,
$\norm{G}_{C^1(\Omega)}$, $\min\limits_{\uS} f$, $\min\limits_{I'}\varphi$, and
$\min\limits_{\Omega} G$. 
We also have
\begin{equation*}
  \begin{split}
    -\alpha_{11} 
    &= \frac{f_1^2}{f^2} -\frac{f_{11}}{f}
    +\frac{h_1^2}{h^2}  -\frac{h_{11}}{h} 
    +\frac{\varphi''h_1^2+\varphi'h_{11}}{\varphi}
    -\frac{(\varphi')^2h_1^2}{\varphi^2}
    -\frac{[\langle \delbar G, e_1 \rangle b_{11}]^2}{G^2}\\
    &\hskip1.2em +\frac{1}{G} \Bigl[
    \langle \langle \delbar^2 G,e_1 \rangle, e_1 \rangle b_{11}^2
    -\langle \delbar G, x \rangle b_{11}
    +\sum_k\langle \delbar G, e_k \rangle b_{k1;1} \Bigr] \\
    &\leq C_3(1+b_{11}+b_{11}^2) +\frac{\sum_k\langle \delbar G, e_k \rangle b_{11;k}}{G},
  \end{split}
\end{equation*}
where $C_3$ is a positive constant depending only on the constants $C$ in Lemmas
\ref{lem03} and \ref{lem05}, $\norm{f}_{C^2(\uS)}$, $\norm{\varphi}_{C^2(I')}$,
$\norm{G}_{C^2(\Omega)}$, $\min\limits_{\uS} f$, $\min\limits_{I'}\varphi$, and
$\min\limits_{\Omega} G$. 
Therefore,
\begin{equation}\label{eq:21}
  \begin{split}
    -b^{11}\alpha_{11} -2B\langle \nabla h,\nabla\alpha \rangle 
    &\leq C_3(b^{11}+1+b_{11}) + C_2B \\
    &\hskip1.1em +\sum_k\frac{\langle \delbar G, e_k \rangle}{G} (b^{11}b_{11;k} + 2Bh_k h_{kk}) \\
    &= C_3(b^{11}+1+b_{11}) + C_2B \\
    &\hskip1.1em +\sum_k\frac{\langle \delbar G, e_k \rangle}{G}\cdot \frac{Ah_k}{h} \\
    &\leq C_3(b^{11}+1+b_{11}) + C_2B +C_4A,
  \end{split}
\end{equation}
where we have used the equality \eqref{C2-1d}, and $C_4$ is a positive constant
depending only on the constants $C$ in Lemmas \ref{lem03} and \ref{lem05},
$\norm{G}_{C^1(\Omega)}$, and $\min\limits_{\Omega} G$.  
Inserting \eqref{eq:21} into \eqref{eq:22}, we obtain that
\begin{equation*}
  (A-n) \sum_i b^{ii}
  +2B\sum_i b_{ii} 
  \leq
  (C_1+C_4)A
  +(C_1+C_2)B
  +C_3(b^{11}+1+b_{11}),
\end{equation*}
which together with $A=n+2BC^2$ implies that
\begin{equation*}
  \begin{split}
  (2BC^2-C_3) \sum_i b^{ii}
  &+(2B-C_3)\sum_i b_{ii} \\
  &\leq
  (C_1+C_4)(n+2BC^2)
  +(C_1+C_2)B
  +C_3.
  \end{split}
\end{equation*}
If we let $B=\max\set{\frac{C_3+1}{2}, \frac{C_3+1}{2C^2}}$, we see that
$b_{11}(x_0,t_0)$ is bounded from above by a positive constant depending only on
$n$, $C_1$, $C_2$, $C_3$ and $C_4$.
By the choice of $(x_0,t_0)$, and recalling Lemmas \ref{lem03} and
\ref{lem05}, $\max\limits_{\uS\times[0,T']} \widetilde{\Lambda}(x,t)$ is bounded from
above by a positive constant depending only on $n$, $C$, $C_1$, $C_2$, $C_3$ and
$C_4$. Since $T'$ can be any number in $(0,T)$,
$\sup\limits_{\uS\times[0,T)} \widetilde{\Lambda}(x,t)$ is bounded from above by a
positive constant independent of $T$.
The proof of this lemma is completed.
\end{proof}

Combining Lemma \ref{lem06} and Lemma \ref{lem07}, we see that
the principal curvatures of $M_{t}$ has uniform positive upper and lower bounds.
This together with Lemmas \ref{lem03}, \ref{lem04} and \ref{lem05} implies that the
evolution equation \eqref{ht} is uniformly parabolic on any finite time
interval. Thus, the result of \cite{KS.IANSSM.44-1980.161} and the standard
parabolic theory show that the smooth solution of \eqref{ht} exists for all
time.
Moreover, by these estimates again, a subsequence of $M_t$ converges in $C^\infty$ to
a positive, smooth, uniformly convex hypersurface $M_\infty$ in $\R^n$.
Now to complete the proof of Theorem \ref{thm2}, it remains to check the support
function of $M_\infty$ satisfies Eq. \eqref{dOMP-f} for some positive constant $c$.

\section{Existence of solutions}

In this section, we first complete the proof of Theorem \ref{thm2}.
Let $\tilde{h}$ be the support function of $M_\infty$. We need to prove that
$\tilde{h}$ is a solution to the following equation
\begin{equation} \label{dOMP-f1}
  c\, \varphi(h) G(\delbar h) \det(\nabla^2h +hI) =f \text{ on } \uS
\end{equation}
for some positive constant $c$.
This will be achieved by analyzing the functional $J(t)$ defined in \eqref{Jt}.

In fact, 
when the condition \eqref{cond1} holds, by Lemma \ref{lem02}, $J'(t)\leq0$ for
any $t>0$. 
From
\begin{equation*}
\int_0^t [-J'(t)] \dd t =J(0)-J(t) \leq J(0),
\end{equation*}
we have
\begin{equation*}
\int_0^\infty [-J'(t)] \dd t  \leq J(0).
\end{equation*}
This implies that there exists a subsequence of times $t_j\to\infty$ such that
\begin{equation*}
-J'(t_j) \to 0 \text{ as } t_j\to\infty.
\end{equation*}
Recalling \eqref{eq:23}:
\begin{equation*}
  \begin{split}
    J'&(t_j) \int_{\uS} f h/ \varphi(h) \dd x \\
    &= \left(
      \int_{\uS}
      \sqrt{\frac{Gh}{\mathcal{K}}}
      \cdot
      \frac{f}{\varphi}
      \sqrt{\frac{h\mathcal{K}}{G}}
      \dd x
    \right)^2
    -\int_{\uS} \frac{G h}{\mathcal{K}} \dd x
    \int_{\uS}\frac{f^2h\mathcal{K}}{\varphi^2G} \dd x.
  \end{split}
\end{equation*}
Since $h$, $|\delbar h|$ and $\mathcal{K}$ have uniform positive upper and lower bounds, by
passing to the limit, we obtain
\begin{multline*}
    \biggl(
      \int_{\uS}
      \sqrt{\frac{G(\delbar\tilde{h})\tilde{h}}{\widetilde{\mathcal{K}}}}
      \cdot
      \frac{f}{\varphi(\tilde{h})}
      \sqrt{\frac{\tilde{h}\widetilde{\mathcal{K}}}{G(\delbar\tilde{h})}}
      \dd x
    \biggr)^2 \\
    =\int_{\uS} \frac{G(\delbar\tilde{h}) \tilde{h}}{\widetilde{\mathcal{K}}} \dd x
    \int_{\uS}\frac{f^2\tilde{h}\widetilde{\mathcal{K}}}{\varphi(\tilde{h})^2G(\delbar\tilde{h})} \dd x,
\end{multline*}
where $\widetilde{\mathcal{K}}$ is the Gauss curvature of $M_\infty$.
By the equality condition for the H\"older's inequality, there exists a constant
$c\geq0$ such that
\begin{equation*}
  c^2\, \frac{G(\delbar\tilde{h}) \tilde{h}}{\widetilde{\mathcal{K}}} 
  = \frac{f^2\tilde{h}\widetilde{\mathcal{K}}}{\varphi(\tilde{h})^2G(\delbar\tilde{h})} 
  \ \text{ on }\uS,
\end{equation*}
namely
\begin{equation*}
  \frac{c\, \varphi(\tilde{h}) G(\delbar\tilde{h})}{\widetilde{\mathcal{K}}} =f
  \ \text{ on }\uS,
\end{equation*}
which is just equation \eqref{dOMP-f1}.
Note $\tilde{h}$, $|\delbar\tilde{h}|$ and $\widetilde{\mathcal{K}}$ have
positive upper and lower bounds, $c$ should be positive.

For the proof when the condition \eqref{cond2} holds.
Noting Lemma \ref{lem03}, there exists a positive constant $C$ which is
independent of $t$, such that $J(t)\leq C$ for any $t\geq0$.
Recalling Lemma \ref{lem02}, $J'(t)\geq0$ for any $t>0$.
Then
\begin{equation*}
\int_0^t J'(t) \dd t =J(t)-J(0) \leq J(t) \leq C,
\end{equation*}
which leads to
\begin{equation*}
\int_0^\infty J'(t) \dd t  \leq C.
\end{equation*}
This implies that there exists a subsequence of times $t_j\to\infty$ such that
\begin{equation*}
J'(t_j) \to 0 \text{ as } t_j\to\infty.
\end{equation*}
Now using almost the same arguments as above, one can prove $\tilde{h}$ solves
Eq. \eqref{dOMP-f1} for some positive constant $c$.
The proof of Theorem \ref{thm2} is completed.
Meanwhile, Theorem \ref{thm1} is a direct consequence of Theorem \ref{thm2}.


\end{document}